\newtheorem{proposition}{{\sc\bf Proposition}}
\newtheorem{theorem}{{\sc\bf Theorem}}
\newtheorem{lemma}{{\sc\bf Lemma}}
\newtheorem{remark}{{\sc\bf Remark}}
\begin{document}
\begin{center}
	\Large \bf On a general definition of the functional linear model
\end{center}
 \normalsize
 \begin{center}
 	Jos\'e R. Berrendero$^1$\hspace{.2cm} Alejandro Cholaquidis$^2$, and \hspace{.2cm} Antonio Cuevas$^1$ \\
 	$^1$ Departamento de Matemáticas, Universidad Autónoma de Madrid\\
 	$^2$ Universidad de la República, Uruguay
 \end{center}

\begin{abstract}
	A general formulation of the linear model with functional (random) explanatory variable $X=X(t),\ t\in T$, and scalar response $Y$ is proposed. It includes the standard functional linear model, based on the inner product in the space $L^2[0,1]$, as a particular case. It also includes all models in which $Y$ is assumed to be (up to an additive noise) a linear combination of a finite or countable collections of marginal variables $X(t_j)$, with $t_j\in T$ or a linear combination of a finite number of linear projections of $X$.
	This general formulation can be interpreted in terms of the RKHS space generated by the covariance function of the process $X(t)$. Some consistency results are proved. A few experimental results are given in order to show the practical interest of considering, in a unified framework, linear models based on a finite number of marginals $X(t_j)$ of the process $X(t)$.
\end{abstract}

\noindent%
{\it Keywords:}  functional data analysis; functional regression; RKHS methods; comparison of linear models
\vfill
 
\section{Introduction}

Linear regression is a topic of leading interest in statistics. The general paradigm is well-known: one aims to predict a response variable $Y$ in the best possible way as a linear (or affine) function of some explanatory variable $X$. 
In the classical case of multivariate regression, where $Y$ is a real random variable and $X$ takes values in ${\mathbb R}^d$ there is little doubt about the meaning of ``linear''. However, this is not that obvious when $X$ is a more complex object that can be modelled via different mathematical structures. The most important example arises perhaps in the field of Functional Data Analysis (FDA) where $X=X(t)$ is a real function;  see e.g., \cite{cuevas2014partial} for a general survey of FDA and \cite{horvath2012} for a more detailed account, including an up-to-date overview of functional linear models.  

\noindent \textit{Some notation}

More precisely, we will deal here with the scalar-on-function regression problem where the response $Y$ is a real random variable and $X$ is a random function (i.e., a trajectory of a stochastic process). In formal terms,
let $(\Omega,\mathcal{F},\mathbb{P})$ be a probability space and denote by $L^2(\Omega) = L^2(\Omega,\mathcal{F},\mathbb{P})$ the space of square integrable random variables defined on $(\Omega,\mathcal{F},\mathbb{P})$. Denote by $\langle X, Y\rangle =\mathbb{E}(XY)$ the inner product in this space and by $\|\cdot\|$ the corresponding induced norm.

Consider a response variable $Y\in L^2(\Omega)$ and a family of regressors  $\{X(t):\, t\in T\}\subset L^2(\Omega)$, where $T$ is an arbitrary index set. For the sake of simplicity we will assume both the response and the explanatory variable are centered, so that $\mathbb{E}(Y)=\mathbb{E}(X(t))=0$, for all $t\in T$.

The covariance function $K:T\times T\to\mathbb{R}$ of $\{X(t):\, t\in T\}$ is defined by $K(s,t)=\langle X(s), X(t)\rangle =\mathbb{E}(X(s)X(t))$. It can be shown that $K$ is the covariance function of a family of variables if and only if $K$ is symmetric and positive semidefinite.  

\noindent \textit{The aim of this work. Some motivation}

Our purpose is to show that the term ``linear'' admits several interpretations in the functional case; all of them could be useful, depending on the considered context. We will provide a general formulation of the linear model and we will show that several useful formulations of functional linear models appear as particular cases. Some basic consistency results will be given regarding the estimation of the slope (possibly functional) parameter. The theory of Reproducing Kernel Hilbert Spaces (RKHS) will be an important auxiliary tool in our approach; see \cite{berlinet2004}. 

In order to give some perspective and motivation, let us consider, for $T=[0,1]$, the classical $L^2$-based functional regression model, as given by
\begin{equation}\label{eq:l2model}
	Y=\int_0^1\beta(t)X(t)dt+\varepsilon:=\langle \beta,X\rangle_2+\varepsilon,
\end{equation}
where $X=X(t)$ is a process with trajectories in the space $L^2[0,1]$ of square integrable real functions, $\langle \cdot,\cdot\rangle_2$ stands for the inner product in $L^2[0,1]$, $\varepsilon$ is the error variable, independent from $X=X(t)$, with ${\mathbb E}(\varepsilon)=0$, ${\mathbb E}(\varepsilon^2)=\sigma^2<\infty$ and $\beta\in L^2[0,1]$ is the slope function. The usual aim in such a model is estimating $\beta$ and $\sigma^2$ from an i.i.d. sample $(X_i,Y_i)$, $i=1,\ldots,n$.   As we are assuming ${\mathbb E}(X(t))=0$ we omit as well the additional intercept additive term  $\beta_0$ in the theoretical developments involving model (\ref{eq:l2model}). This term will be incorporated in the numerical examples of Section \ref{sec:empirical}. 

The vast majority of literature on functional linear regression is concerned with model \eqref{eq:l2model}; see, e.g., the pioneering book by \cite{ramsay2005} (whose first edition dates from 1997), as well as   the paper by \cite{cardot1999functional}, the book by \cite{horvath2012} and references therein. Though this model is, in several aspects, natural and useful, we argue here that this is not the only sensible approach to functional linear regression. 

There are several reasons for this statement: first, unlike the finite dimensional model $Y=\beta_1X_1+\ldots+\beta_dX_d+\varepsilon$, there is no obvious, easy to calculate, estimator for $\beta$ under model \eqref{eq:l2model}. The simple, elegant least squares theory is no longer available here. The optimality properties (Gauss-Markov Theorem) of the finite-dimensional least squares estimator do not directly apply for \eqref{eq:l2model} either. Second, note that in the finite-dimensional situation, where $X=(X_1,\ldots,X_d)$,  there is a strong case in favour of a model of type $Y=\beta_1X_1+\ldots+\beta_dX_d+\varepsilon$. The reason is that, as it is well-known, when the joint distribution of all the involved variables is Gaussian, the best approximation of $Y$ in terms of $X=(X_1,\ldots,X_d)$  is necessarily of type $\beta_1X_1+\ldots+\beta_dX_d$. A similar motivating property does not hold for model \eqref{eq:l2model}.
Third, some natural,  linear-like functional approaches do not appear as particular instances of  \eqref{eq:l2model}; this is the case, for example, with an approach of type \textit{``the response $Y$ is (up to an additive noise) a linear combination of a finite or countable subset of variables $\{X(t),\ t\in T\}$''}. 

Our goal here is to analyse a more general linear model which partially addresses these downsides and includes model \eqref{eq:l2model} as a particular case.   Perhaps more importantly, the finite dimensional models of type $Y=\beta_1X(t_1)+\ldots+\beta_pX(t_p)$, with $t_j\in T$, $\beta_j\in {\mathbb R}$ and $p\in {\mathbb N}$ are also included. This is particularly relevant in practice since, in many cases,  such models (obtained, for example, by a variable selection procedure)  have a larger predictive power than that of the $L^2$-based model \eqref{eq:l2model}; see the experiments in Section \ref{sec:empirical}.

It is worth noting that, essentially, this model already appears in the recent paper
by \cite{berrendero2019rkhs} as a suitable setting for variable selection in functional regression. The novel contribution here is to provide a more complete perspective of such model, by formulating it in two equivalent ways and realizing its full generality (beyond variable selection), along the lines commented in the previous paragraph. Also, the estimation of the regression parameter is also explicitly addressed here.  

\noindent \textit{The organization of this paper}

In Section \ref{sec:formulation} our general linear RKHS-based model is defined. Section \ref{sec:particular} is devoted to prove that some relevant examples of practical interest appear just as particular cases of such a model. Two results of consistent estimation of the slope function are given in Section \ref{sec:consistency}. Some experimental results are discussed in 
Section \ref{sec:empirical}.

\section{A general formulation of the functional linear model}\label{sec:formulation}

In the functional framework introduced in the previous section, a linear model might be 
defined as any suitable linear expression of the variables $X(t)$ aiming to explain (predict) the response variable $Y$. The $L^2$-model \eqref{eq:l2model} is just one possible formulation of such idea. 

If the random variables (rv) $X(t)$ are defined in a common sample space $\Omega$, let $L^2(\Omega)$ be the space of all rv's with finite second moment. 
Thus, in the present work, by ``linear expression'' we mean   an element of the closed linear subspace $L_X$ of $L^2(\Omega)$ spanned  by the variables in $\{X(t):\, t\in T\}$. In other words, $L_X$ is  the closure of the linear subspace of all finite linear combinations of variables in the collection.  Hence, $L_X$  includes both finite linear combinations of the form $\sum_{i=1}^p \beta_i X(t_i)$ (where $p\in\mathbb{N}$, $\beta_1,\ldots,\beta_p\in \mathbb{R}$, and $t_i,\ldots,t_p \in T$) and  rv's   $U$ such that there exists  a sequence $U_n$ of these linear combinations with $\|U_n-U\|\to 0$, as $n\to\infty$.

In more precise terms, our general linear model will be defined by assuming that $Y$ and  $\{X(t):\, t\in T\}$ are related by the following linear regression model:
\begin{equation}
	\label{eq.linear-model}
	Y = U_X + \varepsilon,
\end{equation}
where $U_X\in L_X$, and $\varepsilon$ is a random variable with zero mean and variance equal to $\sigma^2$ which is assumed to belong to  $L_X^{\perp}$, the orthogonal complement of $L_X$, that is $ {\mathbb E}(\varepsilon U_X)=0$ for all $U\in L_X$.

Since $L_X$ is closed,  we know  that $L^2(\Omega)=L_X\oplus L_X^{\perp}$, and then the elements in the model are unambiguously given by the orthogonal projections $U_X=\mbox{Proj}_{L_X}(Y)$ and $\varepsilon=\mbox{Proj}_{L_X^{\perp}}(Y).$

\subsection{An RKHS formulation of the proposed linear model}

The aim of this subsection is to give a fairly natural parametrization of model (\ref{eq.linear-model}), based on the RKHS theory. As a consequence of this alternative formulation we will show that several useful linear models appear as particular cases of (\ref{eq.linear-model}).

Denote by $\mathbb{R}^T$ the set of all real functions defined on $T$. 
We are going to define a map $\Psi_X: L_X \to \mathbb{R}^T$  that will play a key role in the sequel: given  $U\in L_X$, $\Psi_X(U)$ is just the function
\begin{equation}
	\label{eq.loeve}
	\Psi_X(U)(t) = \mathbb{E}(UX(t)).
\end{equation}
As we will next show, this transformation defines an isometry (often called Lo\`eve's isometry; see Lemma 1.1 in \cite{lukic2001stochastic}) between $L_X$ and $\Psi_X(L_X)$. We will also see that $\Psi_X(L_X)$ coincides in fact with the RKHS generated by $K$.

To begin with let us recall here, for the sake of completeness, a simple lemma collecting two elementary properties of $\Psi_X$: 
\begin{lemma}
	\label{lemma.elementary}
	Let $\Psi_X(U)$ be as defined in (\ref{eq.loeve}). Then,
	\begin{compactitem}
		\item[(a)]  $\Psi_X$ is injective.
		\item[(b)]  $\Psi_X(X(t))(\cdot)=K(\cdot, t)$, for all $t\in T$. Equivalently,  $\Psi_X^{-1}[K(\cdot,t)]=X(t)$.
	\end{compactitem}
\end{lemma}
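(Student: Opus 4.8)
The plan is to prove (a) and (b) essentially from the definition of $\Psi_X$ and the linearity of the expectation, since both are ``elementary'' as the statement advertises. For part (b), I would simply evaluate the defining formula \eqref{eq.loeve} at $U = X(s)$: then $\Psi_X(X(s))(t) = \mathbb{E}(X(s)X(t)) = K(t,s) = K(s,t)$ by symmetry of the covariance function, which is exactly the claim $\Psi_X(X(s))(\cdot) = K(\cdot,s)$. The equivalent reformulation $\Psi_X^{-1}[K(\cdot,t)] = X(t)$ is then immediate once injectivity (part (a)) is available, because $X(t) \in L_X$ and $\Psi_X$ maps it to $K(\cdot,t)$, so it is the unique preimage.

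For part (a), since $\Psi_X$ is clearly linear (the map $U \mapsto \mathbb{E}(UX(t))$ is linear for each fixed $t$), injectivity is equivalent to showing that $\Psi_X(U) = 0$ (the zero function on $T$) forces $U = 0$ in $L^2(\Omega)$. Suppose $\Psi_X(U)(t) = \mathbb{E}(UX(t)) = 0$ for all $t \in T$. Then, by linearity, $\mathbb{E}(UV) = 0$ for every finite linear combination $V = \sum_{i=1}^p \beta_i X(t_i)$, i.e. $U$ is orthogonal in $L^2(\Omega)$ to the (non-closed) linear span of $\{X(t):\, t\in T\}$. By continuity of the inner product, $U$ is then orthogonal to the closure of that span, which is precisely $L_X$. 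But $U \in L_X$ by hypothesis, so $U \perp U$, i.e. $\|U\|^2 = \mathbb{E}(U^2) = 0$, hence $U = 0$. This establishes injectivity.

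There is really no serious obstacle here; the only point requiring a little care is the passage from orthogonality to the dense linear span to orthogonality to its closure $L_X$, which uses continuity of $\langle\cdot,\cdot\rangle$ in $L^2(\Omega)$ together with the definition of $L_X$ as that closure. Once that is noted, the argument is a one-line Hilbert-space fact. I would write parts (a) and (b) in that order, since (b)'s ``$\Psi_X^{-1}$'' formulation formally depends on (a) for the inverse to be well defined on the range.
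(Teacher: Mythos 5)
Your proposal is correct and follows essentially the same route as the paper: for (a) both arguments reduce to observing that $\mathbb{E}[(U-V)X(t)]=0$ for all $t$ places $U-V$ in $L_X^{\perp}$, and since $U-V\in L_X$ this forces $U=V$ (you merely spell out the span-to-closure step that the paper leaves implicit). Part (b) is, as in the paper, an immediate consequence of the definition.
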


\begin{proof}
	To show (a), let $U,V\in L_X$ such that $\Psi_X(U)(t)=\Psi_X(V)(t)$, for all $t\in T$. Then,  $\mathbb{E}[(U-V)X(t)]=0$, for all $t\in T$, what implies $U-V\in L_X^{\perp}$. Since we also have $U-V\in L_X$, we get $U=V$; recall that in $L^2$ spaces we identify those functions that coincide almost surely.
	
	Property (b) is obvious from the definition.
\end{proof}

Denote by $H_K$ the image of $\Psi_X$ so that $\Psi_X: L_X \to H_K$ is a bijection. The subscript $K$ emphasizes the fact that the properties of $H_K$ are closely related to those of the covariance function. Observe that by Lemma \ref{lemma.elementary}(b), all the finite linear combinations $\sum_{j=1}^p \beta_j K(\cdot, t_j)$ belong to  $H_K$. 

The inner product in $L_X$ induces an inner product in  $H_K$: given $f, g\in H_K$, define 
\[
\langle f,g\rangle_K := \langle \Psi_X^{-1}(f),\Psi_X^{-1}(g)\rangle.
\]

As a consequence of the considerations above, $H_K$ endowed with $\langle \cdot, \cdot\rangle_K$, is a Hilbert space. Once we endow $H_K$ with this structure, the mapping $\Psi_X$ is a linear, bijective, and inner product preserving operator between  $L_X$ and $H_K$; this accounts for the word ``isometry'' in the usual name  (which is often called Loève's isometry).

On the other hand, it is well-known (see, e.g., Appendix F in \cite{janson1997gaussian} for details) that, given a positive semi-definite function $K:T\times T\rightarrow {\mathbb R}$ (called ``reproducing kernel''), there is a unique Hilbert space, generated by the linear combinations of the form $\sum_j\beta_jK(\cdot,t_j)$. This space is usually called the Reproducing Kernel Hilbert Space (RKHS) associated with $K$. 

The following simple result shows that $\Psi_X(L_X)=H_K$ coincides in fact with the RKHS generated by the covariance function $K$ of the process $X=X(t)$.  

\begin{proposition}
	The Hilbert space $H_K$ and the covariance function $K$ satisfy the following two properties:
	\begin{compactitem}
		\item[(a)] For all $t\in T$, $K(\cdot, t)\in H_K$. 
		\item[(b)] {\em Reproducing property}: for all $f\in H_K$ and $t\in T$, $\langle f, K(\cdot,t)\rangle_K = f(t)$.
	\end{compactitem}
\end{proposition}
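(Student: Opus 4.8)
The plan is to verify the two defining properties of the RKHS directly from the construction of $H_K$ as the isometric image $\Psi_X(L_X)$, using Lemma \ref{lemma.elementary} and the definition of $\langle\cdot,\cdot\rangle_K$. Property (a) is immediate: by Lemma \ref{lemma.elementary}(b) we have $K(\cdot,t)=\Psi_X(X(t))$, and $X(t)\in L_X$, so $K(\cdot,t)$ lies in the image $H_K$ of $\Psi_X$.

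For property (b), the key observation is that any $f\in H_K$ is of the form $f=\Psi_X(U)$ for a unique $U\in L_X$. First I would compute, for such $f$ and any $t\in T$,
\[
\langle f, K(\cdot,t)\rangle_K = \langle \Psi_X^{-1}(f), \Psi_X^{-1}(K(\cdot,t))\rangle = \langle U, X(t)\rangle = \mathbb{E}(U X(t)),
\]
where the first equality is the definition of $\langle\cdot,\cdot\rangle_K$, the second uses Lemma \ref{lemma.elementary}(b) (namely $\Psi_X^{-1}[K(\cdot,t)]=X(t)$), and the third is just the definition of the inner product in $L^2(\Omega)$. But by the very definition of $\Psi_X$ in \eqref{eq.loeve}, $\mathbb{E}(U X(t)) = \Psi_X(U)(t) = f(t)$. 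This establishes the reproducing property.

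Finally, to justify the claim that $H_K$ \emph{is} the RKHS generated by $K$ (and not merely a Hilbert space of functions on $T$ satisfying properties (a) and (b)), I would invoke the uniqueness part of the Moore--Aronszajn theorem, already recalled in the excerpt from \cite{janson1997gaussian}: a Hilbert space of functions on $T$ in which evaluation is continuous and which contains all the kernel sections $K(\cdot,t)$ with the reproducing property is unique. Properties (a) and (b) are exactly these requirements — continuity of evaluation follows from (b) via Cauchy--Schwarz, $|f(t)| = |\langle f,K(\cdot,t)\rangle_K| \le \|f\|_K \sqrt{K(t,t)}$ — and one also notes that the finite linear combinations $\sum_j\beta_j K(\cdot,t_j)=\Psi_X(\sum_j\beta_j X(t_j))$ are dense in $H_K$ because their preimages span a dense subspace of $L_X$ and $\Psi_X$ is an isometry. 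Hence $H_K$ coincides with the RKHS of $K$.

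I do not anticipate a genuine obstacle here; the proof is essentially a bookkeeping exercise transporting the Hilbert space structure of $L_X$ through Loève's isometry. The only point requiring a little care is making precise in what sense $H_K$ equals ``the'' RKHS of $K$ — i.e., being explicit that we are appealing to the uniqueness clause of Moore--Aronszajn and checking the density of the kernel sections — rather than the routine verification of (a) and (b) themselves.
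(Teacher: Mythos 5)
Your proof is correct and follows essentially the same route as the paper: (a) from Lemma \ref{lemma.elementary}(b), and (b) by unwinding the definition of $\langle\cdot,\cdot\rangle_K$ through Lo\`eve's isometry to get $\langle f,K(\cdot,t)\rangle_K=\mathbb{E}[\Psi_X^{-1}(f)X(t)]=f(t)$. Your closing paragraph on the uniqueness clause of Moore--Aronszajn and the density of the kernel sections is a sensible addition that the paper leaves implicit, but it is not needed to prove the stated proposition.
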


\begin{proof}
	(a) follows directly from Lemma \ref{lemma.elementary}(b). To prove (b),
	\[
	\langle f, K(\cdot,t)\rangle_K = \langle \Psi_X^{-1}(f), X(t)\rangle =\mathbb{E}[\Psi_X^{-1}(f) X(t)] = \Psi_X[\Psi_X^{-1}(f)](t)=f(t).
	\]
	The first equality is due to Lemma \ref{lemma.elementary}(b).
\end{proof}

We are now in a position to recast the general model \eqref{eq.linear-model} into a sort of parametric formulation, where the ``parameter'' belongs to the RKHS generated by the covariance function $K$ of the process $X=X(t)$.
As we will see, this reformulation will be particularly useful to encompass several particular cases of practical relevance.

\begin{theorem}
	Model \eqref{eq.linear-model} can be equivalently established in the form
	\begin{equation}
		\label{eq.linear-loeve}
		Y = \Psi_X^{-1}(\alpha) + \varepsilon,
	\end{equation}
	where $\alpha\in H_K$ and $\varepsilon\in L_X^{\perp}$ is a random variable with zero mean and variance equal to $\sigma^2$ and $\Psi_X^{-1}$ is the inverse of Lo\`eve's isometry $\Psi_X:L_X\rightarrow H_K$ defined above.
	
	In addition the ``parameter'' $\alpha$ is the cross-covariance function 
	$\alpha(t)={\mathbb E}(YX(t))$. 
	
	\begin{proof}
		Formulation \eqref{eq.linear-loeve} follows directly from the definition of $\Psi_X$ and the fact that this transformation is a bijection between $L_X$ and $H_K$; hence $U_X\in L_X$ if and only if there exists a (unique) $\alpha\in H_K$ such that $U_X=\Psi_X^{-1}(\alpha)$. 
		
		To prove the second assertion note that, by the reproducing property, $\alpha(t) = \langle \alpha, K(\cdot,t)\rangle_K $ for all $t\in T$, and hence
		\begin{equation}
			\label{eq.covariance}
			\alpha(t) = \langle \alpha, K(\cdot,t)\rangle_K  =\mathbb{E}[\Psi_X^{-1}(\alpha) X(t)] = \mathbb{E}[(Y-\varepsilon) X(t)] = \mathbb{E}[Y X(t)], 
		\end{equation}
		because $\varepsilon\in L_X^{\perp}$.
	\end{proof}
\end{theorem}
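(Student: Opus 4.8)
The plan is to establish the two assertions in sequence, using only the isometry $\Psi_X:L_X\to H_K$ and the decomposition of the model. First I would prove the equivalence of \eqref{eq.linear-model} and \eqref{eq.linear-loeve}. The key observation is that $\Psi_X$ is a bijection between $L_X$ and $H_K$ (this is how $H_K$ was defined, as the image of $\Psi_X$, and injectivity is Lemma~\ref{lemma.elementary}(a)). Hence an element $U_X$ lies in $L_X$ if and only if there is a unique $\alpha\in H_K$ with $\Psi_X(U_X)=\alpha$, equivalently $U_X=\Psi_X^{-1}(\alpha)$. Substituting this into $Y=U_X+\varepsilon$ gives $Y=\Psi_X^{-1}(\alpha)+\varepsilon$, and the conditions on $\varepsilon$ (zero mean, variance $\sigma^2$, membership in $L_X^\perp$) are untouched by this change of variables. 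So the two formulations carry exactly the same information.

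For the second assertion, I would compute $\alpha(t)$ by invoking the reproducing property of $H_K$ established in the preceding Proposition: for every $t\in T$, $\alpha(t)=\langle \alpha, K(\cdot,t)\rangle_K$. Now unfold the definition of $\langle\cdot,\cdot\rangle_K$ as the pullback of the $L^2(\Omega)$ inner product under $\Psi_X^{-1}$, together with Lemma~\ref{lemma.elementary}(b), which identifies $\Psi_X^{-1}(K(\cdot,t))=X(t)$. This turns $\langle \alpha, K(\cdot,t)\rangle_K$ into $\langle \Psi_X^{-1}(\alpha), X(t)\rangle = \mathbb{E}[\Psi_X^{-1}(\alpha)\,X(t)]$. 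Finally, since $\Psi_X^{-1}(\alpha)=U_X=Y-\varepsilon$ and $\varepsilon\in L_X^\perp$ forces $\mathbb{E}[\varepsilon X(t)]=0$ for all $t$, we get $\mathbb{E}[\Psi_X^{-1}(\alpha)X(t)]=\mathbb{E}[(Y-\varepsilon)X(t)]=\mathbb{E}[YX(t)]$, which is the claimed cross-covariance identity. This is exactly the chain of equalities in \eqref{eq.covariance}.

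Honestly, there is no serious obstacle here: both parts are immediate consequences of the isometry property of $\Psi_X$, the reproducing property, and the orthogonality $\varepsilon\perp L_X$. If anything, the only point deserving a word of care is making sure $\Psi_X^{-1}$ is well-defined on all of $H_K=\Psi_X(L_X)$ and that $\alpha$ indeed lies in $H_K$ (not merely in $\mathbb{R}^T$); but this is guaranteed because $U_X\in L_X$ by hypothesis, so $\alpha=\Psi_X(U_X)\in H_K$ by construction. I would therefore present the proof as the two short displays above, citing Lemma~\ref{lemma.elementary} and the reproducing property at the appropriate steps.
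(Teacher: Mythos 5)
Your proposal is correct and follows essentially the same route as the paper: bijectivity of the Lo\`eve isometry for the equivalence of the two formulations, and the reproducing property combined with $\varepsilon\in L_X^{\perp}$ for the identification $\alpha(t)=\mathbb{E}[YX(t)]$, which is exactly the chain of equalities in \eqref{eq.covariance}. No gaps.
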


As a consequence, the RKHS $H_K$ is a fairly natural parametric space for our general linear regression model. 

\begin{remark}\label{parzen_notation}
	Let us note that model \eqref{eq.linear-loeve} was already considered, with a different notation, in the paper by \cite{berrendero2019rkhs}. 
	Indeed, the inverse Lo\`eve transformation $\Psi_X^{-1}(\alpha)$ is sometimes denoted $\langle \alpha, X\rangle_K$. This is somewhat of a notational abuse, as typically the trajectories of the process do not belong to the RKHS $H_K$; see \cite{berrendero2019rkhs} for details. Still, the notation is often useful and convenient, so that the RKHS-model can be expressed also by
	\begin{equation}\label{eq:parzen_model}
		Y=\langle X,\alpha\rangle_K+\varepsilon.
	\end{equation}
\end{remark}

\section{Some important particular cases}\label{sec:particular}

The above mentioned work by \cite{berrendero2019rkhs}  focusses in the model \eqref{eq.linear-loeve}-\eqref{eq:parzen_model} from the point of view of its application to variable selection topics. In the present work, we go further in the study of such model: in addition to the relevant equivalence  (that we have just established) between \eqref{eq.linear-loeve} and \eqref{eq.linear-model}, we show in this section the generality of such model, by showing that several commonly used models appear just as particular cases. In Section \ref{sec:consistency} we address the problem of estimating the ``regression parameter'' $\alpha$ and in Section \ref{sec:empirical} we carry out some numerical experiments.

\subsection{Finite dimensional models: a setup for variable selection problems}

When there are infinitely many regressors (which is the case in functional regression problems), several procedures of \textit{variable selection} are available (see \cite{berrendero2019rkhs} for details) for replacing the whole set of explanatory variables with a finite, carefully chosen, subset of these variables.  The following proposition characterizes when it is possible to apply these procedures without any loss of information at all.

\begin{proposition}
	\label{prop.variable-selection}
	Under model \eqref{eq.linear-loeve}-\eqref{eq:parzen_model}, there exist $X(t^*_1),\ldots,X(t_p^*)\in \{X(t):\, t\in T\}$ and $\beta_1,\ldots,\beta_p\in\mathbb{R}$ such that $Y=\beta_1X(t^*_1)+\cdots + \beta_pX(t^*_p)+\varepsilon$ if and only if for all $t\in T$, $\alpha(t) = \beta_1K(t,t_1^*) + \cdots + \beta_pK(t,t_p^*)$.  
\end{proposition}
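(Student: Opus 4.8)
The plan is to read the equivalence off directly from Loève's isometry, with essentially no computation. First I would use the uniqueness of the orthogonal decomposition: since $L^2(\Omega)=L_X\oplus L_X^\perp$, writing $Y$ as a sum of an element of $L_X$ and an element of $L_X^\perp$ can be done in only one way, namely $Y=U_X+\varepsilon$ with $U_X=\mathrm{Proj}_{L_X}(Y)$ and $\varepsilon=\mathrm{Proj}_{L_X^\perp}(Y)$ as in model \eqref{eq.linear-model}. Consequently, since $\beta_1X(t_1^*)+\cdots+\beta_pX(t_p^*)$ is automatically a member of $L_X$, the identity $Y=\beta_1X(t_1^*)+\cdots+\beta_pX(t_p^*)+\varepsilon$ holds (with this same $\varepsilon$) if and only if $U_X=\beta_1X(t_1^*)+\cdots+\beta_pX(t_p^*)$. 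This reduces the proposition to a statement purely about $U_X$.

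Next I would apply the map $\Psi_X$ to both sides of that identity in $L_X$. Because $\Psi_X$ is linear and injective (Lemma \ref{lemma.elementary}), the equality $U_X=\sum_{j=1}^p\beta_jX(t_j^*)$ in $L_X$ is equivalent to the equality $\Psi_X(U_X)=\sum_{j=1}^p\beta_j\,\Psi_X(X(t_j^*))$ in $H_K$. Now $\Psi_X(U_X)=\alpha$ by the definition of $\alpha$ in \eqref{eq.linear-loeve} (equivalently, $U_X=\Psi_X^{-1}(\alpha)$), and $\Psi_X(X(t_j^*))(\cdot)=K(\cdot,t_j^*)$ by Lemma \ref{lemma.elementary}(b). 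Hence the condition becomes $\alpha=\sum_{j=1}^p\beta_jK(\cdot,t_j^*)$, that is, $\alpha(t)=\beta_1K(t,t_1^*)+\cdots+\beta_pK(t,t_p^*)$ for every $t\in T$, which is exactly the asserted characterization. Both implications come out at once since every step is an equivalence.

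I do not expect a genuine obstacle: the result is little more than a transcription of the isometry property. The only points deserving an explicit word are (i) that the $\varepsilon$ appearing in the two formulations is forced to be the same random variable, which is why the claim legitimately reduces to an identity between $U_X$ and the finite linear combination — this is the uniqueness of the decomposition $L^2(\Omega)=L_X\oplus L_X^\perp$; and (ii) that it is the injectivity of $\Psi_X$ (already proved in Lemma \ref{lemma.elementary}(a)) that lets us pass from equality of the images in $H_K$ back to equality of the preimages in $L_X$. Neither requires anything beyond what has been established above.
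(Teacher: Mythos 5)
Your argument is correct and follows essentially the same route as the paper: identify the finite linear combination with $\Psi_X^{-1}(\alpha)$ and then transport the equality through Loève's isometry using Lemma \ref{lemma.elementary}. The only difference is that you make explicit the uniqueness of the decomposition $L^2(\Omega)=L_X\oplus L_X^{\perp}$ (forcing the two $\varepsilon$'s to coincide), a step the paper leaves implicit; this is a welcome clarification rather than a deviation.
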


\begin{proof}
	By Lemma \ref{lemma.elementary}(b), $Y=\beta_1X(t^*_1)+\cdots + \beta_pX(t^*_p)+\varepsilon$ if and only if $\Psi_X^{-1}(\alpha) = \beta_1\Psi_X^{-1}(K(\cdot, t^*_1))+\cdots + \beta_p(K(\cdot, t^*_p))$, what in turn happens if and only if $\alpha(t) = \beta_1K(t,t_1^*) + \cdots + \beta_pK(t,t_p^*)$.
\end{proof}

\subsection{The classical $L^2$-model}

For $T=[0,1]$ assume that $\{X(t):\, t\in T\}$ is an $L^2$ random process and $Y$ a response variable such that the RKHS linear model \eqref{eq.linear-model} or, equivalently \eqref{eq.linear-loeve} or  (\ref{eq:parzen_model}), holds. To gain some insight, let us illustrate this with an example, beyond the finite-dimensional models considered in the previous subsection. 

\noindent {\bf Example (Brownian regressors):} When $\{X(t):\, t\in [0,1]\}$ is a standard Brownian Motion ($K(s,t)=\min\{s,t\}$)  it can be shown  
\[
H_K=\{\alpha\in L^2[0,1]:\, \alpha(0)=0,\ \mbox{there exists}\ \alpha'\in L^2[0,1]\ \mbox{such that}\ \alpha(t) = \int_0^t \alpha'(s)ds\}.
\]
and $\langle \alpha_1,\alpha_2\rangle_K=\langle \alpha'_1,\alpha'_2\rangle_2$.  It can also be proved that  $\Psi_X^{-1}(\alpha)$ is given by Itô's stochastic integral, $\Psi_X^{-1}(\alpha) = \int_0^1\alpha'(t) dX(t)$; for these results see \cite{janson1997gaussian}, Example 8.19, p. 122.  Thus,  in this case, the linear model \eqref{eq.linear-model} and \eqref{eq.linear-loeve} reduces to  $Y= \int_0^1\alpha'(t) dX(t) + \varepsilon$. 

\

Our goal in this subsection is to analyze under which conditions the RKHS model \eqref{eq.linear-loeve} or \eqref{eq:parzen_model} entails the $L^2$-model \eqref{eq:l2model}. To do  this, we need to recall some basic facts about the RKHS space associated with $K$. 
Let us denote by $\mathcal{K}:L^2[0,1]\to L^2[0,1]$ the integral operator defined by $K$, that is
\[
\mathcal{K}f(t) = \int_0^1 K(t,s) f(s) ds.
\]
We will henceforth assume that $K$ is continuous. Under this condition, it is well-known that $\mathcal{K}$ is a compact and self-adjoint operator.  The following proposition is a standard result in the RKHS theory. It relates $H_K$ to $L^2[0,1]$. The proof can be found, e.g. in \cite[Th. 4.1.2]{cucker2001}.

\begin{proposition}
	\label{prop.RKHS-K}
	Assume $T=[0,1]$ and $K$ is continuous. Let $\lambda_1\geq \lambda_2\geq \cdots$ be the eigenvalues of $\mathcal{K}$ (which are assumed to be strictly positive for simplicity) and let $e_1, e_2,\ldots$ be the corresponding unit eigenfunctions. Then, the RKHS corresponding to $K$ is
	\begin{equation*} \label{eq.RKHS-K}
		H_K = \{f\in L^2[0,1]:\,  \sum_{i=1}^\infty \frac{\langle f,e_i\rangle_2^2}{\lambda_i}<\infty\} = \mathcal{K}^{1/2}(L^2[0,1]),
	\end{equation*}
	endowed with the inner product $\langle f,g\rangle_{K}=\sum_{i=1}^\infty \langle f,e_i\rangle_2 \langle g,e_i\rangle_2/\lambda_i$.
\end{proposition}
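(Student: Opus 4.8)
The plan is to prove both equalities and the inner-product formula at once by recognizing the right-hand side as \emph{the} RKHS of $K$, and then invoking the fact (already established above, where $H_K=\Psi_X(L_X)$ was shown to be the RKHS generated by $K$) that this is exactly $H_K$. Write $G:=\{f\in L^2[0,1]:\sum_i\langle f,e_i\rangle_2^2/\lambda_i<\infty\}$ equipped with $\langle f,g\rangle_G:=\sum_i\langle f,e_i\rangle_2\langle g,e_i\rangle_2/\lambda_i$. The only external input I would use is \textbf{Mercer's theorem}: since $K$ is continuous (so $\mathcal K$ is compact, self-adjoint and positive semidefinite on $L^2[0,1]$), $K(s,t)=\sum_i\lambda_i e_i(s)e_i(t)$ with the series converging absolutely and uniformly on $[0,1]^2$; in particular $\sum_i\lambda_i e_i(t)^2=K(t,t)$ for every $t$, and $t\mapsto K(t,t)$ is continuous, hence bounded. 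I would also note that, because all $\lambda_i>0$, $\mathcal K$ is injective, so $\{e_i\}$ is an orthonormal \emph{basis} of $L^2[0,1]$; this is what makes the coefficient map well defined and the description of $G$ correct in the first place.

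The steps for $H_K=G$ are then: (i) $G$ is a Hilbert space, since $f\mapsto(\langle f,e_i\rangle_2/\sqrt{\lambda_i})_i$ is a linear isometry of $G$ onto $\ell^2$ (surjectivity uses that $\{e_i\}$ is a basis). (ii) For each $t$, $\langle K(\cdot,t),e_i\rangle_2=\int_0^1 K(s,t)e_i(s)\,ds=\mathcal Ke_i(t)=\lambda_i e_i(t)$, so $\sum_i\langle K(\cdot,t),e_i\rangle_2^2/\lambda_i=\sum_i\lambda_i e_i(t)^2=K(t,t)<\infty$ by Mercer; hence $K(\cdot,t)\in G$. (iii) For $f\in G$ the reproducing identity holds:
\[
\langle f,K(\cdot,t)\rangle_G=\sum_i\frac{\langle f,e_i\rangle_2\,\lambda_i e_i(t)}{\lambda_i}=\sum_i\langle f,e_i\rangle_2 e_i(t)=f(t),
\]
the last equality being the pointwise eigenexpansion of $f$ (justified below). (iv) The span of $\{K(\cdot,t):t\in[0,1]\}$ is dense in $G$, because any $g\in G$ with $\langle g,K(\cdot,t)\rangle_G=0$ for all $t$ has $g(t)=0$ for all $t$, i.e. $g=0$. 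A Hilbert space of functions on $[0,1]$ that contains all $K(\cdot,t)$, has them as a reproducing system with the stated inner product, and in which they span a dense subspace, is the RKHS of $K$; as already shown this is $H_K$, with $\langle\cdot,\cdot\rangle_K=\langle\cdot,\cdot\rangle_G$, which also yields the claimed inner-product formula.

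For $H_K=\mathcal K^{1/2}(L^2[0,1])$ I would use the spectral definition $\mathcal K^{1/2}g=\sum_i\sqrt{\lambda_i}\,\langle g,e_i\rangle_2\,e_i$. If $f=\mathcal K^{1/2}g$ then $\langle f,e_i\rangle_2=\sqrt{\lambda_i}\,\langle g,e_i\rangle_2$, so $\sum_i\langle f,e_i\rangle_2^2/\lambda_i=\sum_i\langle g,e_i\rangle_2^2=\|g\|_2^2<\infty$ and $f\in G$. Conversely, given $f\in G$, the function $g:=\sum_i\lambda_i^{-1/2}\langle f,e_i\rangle_2 e_i$ lies in $L^2[0,1]$ (its squared norm equals $\|f\|_G^2$) and satisfies $\mathcal K^{1/2}g=f$. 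Thus $G=\mathcal K^{1/2}(L^2[0,1])$, and in passing $\mathcal K^{1/2}:L^2[0,1]\to G$ is a surjective isometry (injective since $\ker\mathcal K=\{0\}$), re-deriving the inner-product formula once more.

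I expect the one genuinely delicate point to be the \textbf{pointwise, indeed uniform, convergence of the eigenexpansion} invoked in step (iii), together with the attendant claim that every element of $G$ has a canonical continuous representative, so that the symbol ``$f(t)$'' is unambiguous. The argument: for $f\in G$ and any $t$, Cauchy–Schwarz gives $\sum_i|\langle f,e_i\rangle_2 e_i(t)|\le\big(\sum_i\langle f,e_i\rangle_2^2/\lambda_i\big)^{1/2}\big(\sum_i\lambda_i e_i(t)^2\big)^{1/2}=\|f\|_G\,K(t,t)^{1/2}$, and the same estimate applied to tails, combined with $\sup_t K(t,t)<\infty$ and the fact that $t\mapsto\sum_{i>N}\lambda_i e_i(t)^2$ is a decreasing sequence of continuous functions tending to $0$ pointwise (Dini's theorem, using Mercer), shows the partial sums converge uniformly; the limit is continuous and agrees with $f$ in $L^2[0,1]$, so it is the representative we use. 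This is precisely where continuity of $K$ enters in an essential way; the rest is bookkeeping with orthonormal coefficients.
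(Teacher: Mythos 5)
Your proof is correct and complete; the one genuinely delicate point (that elements of $G$ have continuous representatives given by a uniformly convergent eigenexpansion, so that pointwise evaluation and the reproducing identity make sense) is exactly the right thing to worry about, and your Cauchy--Schwarz tail estimate against $\sum_{i>N}\lambda_i e_i(t)^2$ handles it properly. Note that the paper does not prove this proposition at all --- it cites it as a standard result from Cucker and Smale --- and the argument in that reference is essentially the Mercer-based one you give, so you have simply supplied the standard proof that the paper outsources.
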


At this point, we should perhaps recall that according to Spectral Theorem \cite[Ch. 4]{gohberg2004basic} under the stated conditions, there is an orthonormal basis of $L^2[0,1]$ made of eigenvectors $e_i$ whose corresponding eigenvalues are $\lambda_i$, in such a way that, for all $f\in L^2[0,1]$ we have $f=\sum_i\langle f,e_i\rangle e_i$ (where the convergence of this series and the corresponding equality must be understood in the $L^2$ sense) and $\mathcal{K}^{1/2}(f)=\sum_i\sqrt{\lambda_i}\langle f,e_i\rangle e_i$. Thus, the membership to $H_K$ must be understood as a ``regularity property'' established in terms of a very fast convergence to zero of the Fourier coefficients $\langle f,e_i\rangle$.

Now, let us go back to the classical  functional linear regression model \eqref{eq:l2model}. We will show that \eqref{eq:l2model} appears as a particular case of our general model \eqref{eq.linear-loeve} if, and only if, the slope function $\beta$ in \eqref{eq:l2model} is regular enough to belong to the image subspace $\mathcal{K}(L^2[0,1])$ which, by Proposition \ref{prop.RKHS-K}, is a subset of $H_K$. The formal statement is as follows.

\begin{proposition}
	\label{prop.model_L2}
	If model (\ref{eq:l2model}) holds, then model (\ref{eq.linear-loeve}) also holds. Reciprocally, if  (\ref{eq.linear-loeve}) holds and there exists $\beta\in L^2[0,1]$ such that $\alpha = \mathcal{K}\beta$, then (\ref{eq:l2model}) holds.
\end{proposition}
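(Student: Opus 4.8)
The plan is to prove the two implications separately, using the characterization of $\alpha$ as the cross-covariance function $\alpha(t)={\mathbb E}(YX(t))$ established in the Theorem, together with Proposition \ref{prop.RKHS-K} to control membership in $H_K$.

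\emph{From \eqref{eq:l2model} to \eqref{eq.linear-loeve}.} Assuming $Y=\langle\beta,X\rangle_2+\varepsilon$ with $\beta\in L^2[0,1]$ and $\varepsilon$ independent of $X$, I would first compute the cross-covariance function. Since $\varepsilon$ is independent of the process with mean zero, ${\mathbb E}(\varepsilon X(t))=0$, so
\[
\alpha(t)={\mathbb E}(YX(t))={\mathbb E}\Big(X(t)\int_0^1\beta(s)X(s)\,ds\Big)=\int_0^1 K(t,s)\beta(s)\,ds=(\mathcal{K}\beta)(t),
\]
where exchanging expectation and integral is justified by Fubini under the square-integrability/continuity hypotheses. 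Thus $\alpha=\mathcal{K}\beta\in\mathcal{K}(L^2[0,1])\subseteq\mathcal{K}^{1/2}(L^2[0,1])=H_K$ by Proposition \ref{prop.RKHS-K}. It remains to check that $\Psi_X^{-1}(\alpha)$ equals the $L^2$-integral $\langle\beta,X\rangle_2$ and that the noise term agrees. For this I would note that $\langle\beta,X\rangle_2\in L_X$ (it is an $L^2(\Omega)$-limit of finite Riemann-type linear combinations $\sum_j\beta(t_j)X(t_j)\Delta t_j$, and the covariance continuity of $K$ gives convergence in $L^2(\Omega)$), and then verify $\Psi_X(\langle\beta,X\rangle_2)(t)={\mathbb E}(X(t)\langle\beta,X\rangle_2)=(\mathcal{K}\beta)(t)=\alpha(t)$; by injectivity of $\Psi_X$ (Lemma \ref{lemma.elementary}(a)), $\langle\beta,X\rangle_2=\Psi_X^{-1}(\alpha)$. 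Since $\varepsilon=Y-\langle\beta,X\rangle_2$ is orthogonal to every $X(t)$ and hence to $L_X$, it lies in $L_X^\perp$, and model \eqref{eq.linear-loeve} holds with this $\alpha$.

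\emph{From \eqref{eq.linear-loeve} to \eqref{eq:l2model}.} Now suppose \eqref{eq.linear-loeve} holds, i.e. $Y=\Psi_X^{-1}(\alpha)+\varepsilon$ with $\alpha\in H_K$ and $\varepsilon\in L_X^\perp$, and suppose additionally $\alpha=\mathcal{K}\beta$ for some $\beta\in L^2[0,1]$. The argument of the previous paragraph shows $\Psi_X(\langle\beta,X\rangle_2)=\mathcal{K}\beta=\alpha=\Psi_X(\Psi_X^{-1}(\alpha))$, so by injectivity $\Psi_X^{-1}(\alpha)=\langle\beta,X\rangle_2=\int_0^1\beta(t)X(t)\,dt$, and hence $Y=\int_0^1\beta(t)X(t)\,dt+\varepsilon$, which is precisely \eqref{eq:l2model}. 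One caveat: \eqref{eq:l2model} as stated also requires $\varepsilon$ to be \emph{independent} of $X$, whereas \eqref{eq.linear-loeve} only gives orthogonality ($\varepsilon\in L_X^\perp$); I expect the statement is to be read in the weaker ``uncorrelated'' sense, or under a joint Gaussianity assumption orthogonality upgrades to independence, and I would add a remark to that effect.

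\emph{Main obstacle.} The only real technical point is the identification $\Psi_X^{-1}(\mathcal{K}\beta)=\langle\beta,X\rangle_2$, i.e. showing that the $L^2[0,1]$-inner-product functional $\langle\beta,X\rangle_2$ is a bona fide element of $L_X\subseteq L^2(\Omega)$ and that applying Loève's isometry to it returns $\mathcal{K}\beta$. This rests on the Fubini/continuity argument for interchanging ${\mathbb E}$ and $\int_0^1$ and on the density of Riemann sums in $L^2(\Omega)$; everything else is a direct application of the reproducing property, Lemma \ref{lemma.elementary}, and Proposition \ref{prop.RKHS-K}. I would make sure to state explicitly the regularity hypotheses on $K$ (continuity, as already assumed in this subsection) needed to run this step.
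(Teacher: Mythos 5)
Your proof is correct and follows essentially the same route as the paper's: both directions hinge on the facts that $\int_0^1\beta(t)X(t)\,dt\in L_X$ and that Fubini gives $\Psi_X\bigl(\langle\beta,X\rangle_2\bigr)=\mathcal{K}\beta$, after which you conclude by injectivity of $\Psi_X$ where the paper invokes uniqueness of the decomposition $L^2(\Omega)=L_X\oplus L_X^{\perp}$ --- two forms of the same fact. Your caveat that the reverse direction only yields $\varepsilon\in L_X^{\perp}$ (uncorrelatedness) rather than independence is a fair observation, and the paper's own proof glosses over the same point.
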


\begin{proof}
	Since $\int_0^1 X(t)\beta(t) dt \in L_X$ (e.g. \cite{ash1975}, p. 34) we have that  (\ref{eq:l2model}) implies (\ref{eq.linear-loeve}).
	Reciprocally, assume $Y=\Psi_X^{-1}(\alpha)+\varepsilon$, where $\alpha = \mathcal{K}\beta$ for $\beta\in L^2[0,1]$, and $\varepsilon\in L_X^{\perp}$. Using Fubini's Theorem we get, for all $t\in [0,1]$,
	\[
	\alpha(t) = \int_0^1 K(t,s)\beta(s) ds = \mathbb{E}\big[X(t) \int_0^1 X(s)\beta(s) ds\big].
	\]
	On the other hand, by (\ref{eq.covariance}) we also have $\alpha(t)= \mathbb{E}[X(t)Y]$, for all $t\in [0,1]$. Then, 
	$\mathbb{E}\big[X(t)\big(Y - \int_0^1 X(t)\beta(t) dt\big)\big] = 0$, for all $t\in [0,1]$. Hence, $Y - \int_0^1 X(t)\beta(t)dt \in L_X^{\perp}$. Now,
	\[
	Y = \int_0^1 X(t)\beta(t) dt + \big(Y - \int_0^1 X(t)\beta(t) dt\big) = \Psi_X^{-1}(\alpha) + \varepsilon,
	\]
	where $\Psi_X^{-1}(\alpha)\in L_X$, $\int_0^1 X(t)\beta(t) dt \in L_X$, and $\varepsilon\in L_X^{\perp}$, $Y - \int_0^1 X(t)\beta(t) dt\in L_X^{\perp}$. Since $L^2(\Omega) = L_X \oplus L_X^{\perp}$, we get $\Psi_X^{-1}(\alpha)=\int_0^1 X(t)\beta(t) dt$ and the result follows.
\end{proof}

Observe that the difference between (\ref{eq.linear-loeve}) and (\ref{eq:l2model}) is not just a minor technical question. There are important values of the parameter $\alpha$ such that $\alpha\in H_K$ but $\alpha \neq \mathcal{K}\beta$. For example, finite linear combinations of the form $\beta_1K(\cdot, t_1)+\cdots + \beta_pK(\cdot,t_p)$, which are important because they allow us to include finite dimensional regression models (also called impact point models in the literature on functional regression)  as particular cases of the general model (see Proposition {\ref{prop.variable-selection} above).

	The  procedures to fit model (\ref{eq:l2model}) very often involve to project $\{X(t):\, t\in [0,1]\}$ on a convenient subspace of $L^2[0,1]$. More precisely, given $\{u_j:\, j = 1,2,\ldots\}$, an arbitrary orthonormal basis of $L^2[0,1]$, it is quite common to use as regressor variables the projections $\{X(t):\, t\in [0,1]\}$ on the finite dimensional  subspace spanned by the first $p$ elements of the basis. This amounts to replace the whole trajectory with $\langle X, u_1\rangle_2 u_1 +\cdots +\langle X, u_p\rangle_2 u_p$. This  method will work fine whenever $\int_0^1 X(t)\beta(t) dt \approx  \sum_{j=1}^p \beta_j \langle X, u_j\rangle_2$, where $\beta_j = \langle \beta, u_j\rangle_2$. A natural question to ask is when there is no loss in using the projection instead of the whole trajectory, and how is this situation characterized in terms of the parameter $\alpha$ in (\ref{eq.linear-loeve}). The answer is given by Proposition \ref{prop.projection} below.
	
	Assume that the following model (explaining the response in terms of the projection of the trajectory) holds:
	\begin{equation}
		\label{eq.projection}
		Y = \beta_1 \langle X, u_1\rangle_2 + \cdots +  \beta_p \langle X, u_p\rangle_2 + \varepsilon,
	\end{equation}
	where $\beta_1,\ldots,\beta_p \in\mathbb{R}$ and $\varepsilon\in L_X^{\perp}$. We have the following result:
	
	\begin{proposition}
		\label{prop.projection}
		If model (\ref{eq.projection}) holds, then model (\ref{eq.linear-loeve}) also holds. Reciprocally, if  (\ref{eq.linear-loeve}) holds and  $\alpha$ belongs to the subspace spanned by $\{\mathcal{K}u_1,\ldots,\mathcal{K}u_p\}$   then (\ref{eq.projection}) holds.
	\end{proposition}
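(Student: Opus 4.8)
The plan is to mimic the structure of the proof of Proposition \ref{prop.model_L2}, since model \eqref{eq.projection} is really just the finite-dimensional $L^2$-model obtained by replacing the trajectory $X$ with its projection onto $\mathrm{span}\{u_1,\dots,u_p\}$. First I would observe that each projection coefficient $\langle X,u_j\rangle_2=\int_0^1 X(t)u_j(t)\,dt$ belongs to $L_X$ (same fact used before, e.g. \cite{ash1975}, p.~34), hence any finite linear combination $\sum_{j=1}^p\beta_j\langle X,u_j\rangle_2$ lies in $L_X$; combined with $\varepsilon\in L_X^\perp$ and the uniqueness of the orthogonal decomposition $L^2(\Omega)=L_X\oplus L_X^\perp$, this gives immediately that \eqref{eq.projection} implies \eqref{eq.linear-loeve}, with $U_X=\sum_{j=1}^p\beta_j\langle X,u_j\rangle_2$ and therefore $\alpha(t)=\mathbb{E}[Y X(t)]$.

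For the converse, suppose \eqref{eq.linear-loeve} holds and $\alpha=\sum_{j=1}^p\beta_j\mathcal{K}u_j$ for some scalars $\beta_j$. The key computation, via Fubini's Theorem exactly as in Proposition \ref{prop.model_L2}, is that for each $j$ and each $t\in[0,1]$,
\[
(\mathcal{K}u_j)(t)=\int_0^1 K(t,s)u_j(s)\,ds=\mathbb{E}\Big[X(t)\int_0^1 X(s)u_j(s)\,ds\Big]=\mathbb{E}\big[X(t)\,\langle X,u_j\rangle_2\big].
\]
Summing with weights $\beta_j$ gives $\alpha(t)=\mathbb{E}\big[X(t)\sum_{j=1}^p\beta_j\langle X,u_j\rangle_2\big]$. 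On the other hand \eqref{eq.covariance} gives $\alpha(t)=\mathbb{E}[X(t)Y]$, so $\mathbb{E}\big[X(t)\big(Y-\sum_{j=1}^p\beta_j\langle X,u_j\rangle_2\big)\big]=0$ for all $t\in T$, i.e. $Y-\sum_{j=1}^p\beta_j\langle X,u_j\rangle_2\in L_X^\perp$. Writing $Y=\sum_{j=1}^p\beta_j\langle X,u_j\rangle_2+\big(Y-\sum_{j=1}^p\beta_j\langle X,u_j\rangle_2\big)$ and using again that the first summand is in $L_X$ while the second is in $L_X^\perp$, the uniqueness of the decomposition forces $\Psi_X^{-1}(\alpha)=\sum_{j=1}^p\beta_j\langle X,u_j\rangle_2$ and $\varepsilon=Y-\sum_{j=1}^p\beta_j\langle X,u_j\rangle_2$, which is precisely \eqref{eq.projection}.

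The only mild subtlety — the part I would be most careful about — is the application of Fubini to justify $\int_0^1 K(t,s)u_j(s)\,ds=\mathbb{E}\big[X(t)\int_0^1 X(s)u_j(s)\,ds\big]$; this requires an integrability condition such as $\int_0^1\mathbb{E}|X(t)X(s)|\,ds<\infty$, which follows from Cauchy--Schwarz and continuity (hence boundedness) of $K$ on the compact $[0,1]^2$, together with $u_j\in L^2[0,1]$. Since continuity of $K$ is already a standing assumption in this subsection (it is invoked for Proposition \ref{prop.RKHS-K}), this is automatic and the same justification used in Proposition \ref{prop.model_L2} applies verbatim. Everything else is a direct repetition of the orthogonal-projection argument, so no genuine obstacle arises; the proposition is essentially a corollary of the techniques already established.
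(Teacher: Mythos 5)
Your proof is correct and follows essentially the same route as the paper: the key step in both is the Fubini computation $\mathbb{E}[X(t)\langle X,u_j\rangle_2]=(\mathcal{K}u_j)(t)$, i.e.\ $\Psi_X(\langle X,u_j\rangle_2)=\mathcal{K}u_j$. The only (cosmetic) difference is that the paper concludes directly by applying the linear bijection $\Psi_X^{-1}$ to $\alpha=\sum_j\beta_j\mathcal{K}u_j$, whereas you rerun the orthogonal-decomposition uniqueness argument from Proposition \ref{prop.model_L2}; both are valid.
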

	
	\begin{proof}
		The proof is similar to that of Proposition \ref{prop.model_L2}. For $j=1,\ldots,p$, we have $\langle X, u_j\rangle_2 \in L_X$ (see e.g. \cite{ash1975}, p. 34) and hence  (\ref{eq.projection}) implies (\ref{eq.linear-loeve}).
		
		Reciprocally, assume $Y=\Psi_X^{-1}(\alpha)+\varepsilon$, where $\alpha = \beta_1\mathcal{K}u_1+\ldots + \beta_p\mathcal{K}u_p$ for $\beta_1,\ldots,\beta_p \in\mathbb{R}$, and $\varepsilon\in L_X^{\perp}$. Using Fubini's Theorem:
		\[
		[\Psi_X(\langle X, u_j\rangle_2)](t) =\mathbb{E}[X(t) \langle X, u_j\rangle_2] = \int_0^1 K(t,s) u_j(s) ds = [\mathcal{K}u_j](t). 
		\]
		Then $\Psi_X^{-1}(\alpha) = \beta_1\Psi_X^{-1} (\mathcal{K}u_1) + \cdots +  \beta_p\Psi_X^{-1} (\mathcal{K}u_p) = \beta_1 \langle X, u_1\rangle_2 + \cdots +  \beta_p \langle X, u_p\rangle_2$, 
		and the result follows.
	\end{proof}
	
	An important particular case is functional principal component regression (FPCR). In FPCR, the orthonormal basis is that given by $u_j = e_j$, the eigenfunctions of $\mathcal{K}$. Then, $\mathcal{K}e_j = \lambda_j e_j$, for $j=1,\ldots,p$, and the condition in Proposition \ref{prop.projection} simply states  that $\alpha$ must belong to the subspace spanned by $\{e_1,\ldots,e_p\}$.
	
	\section{Consistent estimation of the slope function} \label{sec:consistency}
	
	\subsection{An $L^2$-consistent estimator based on regularization}
	
	Let $(Y_1,X_1),\ldots,(Y_n,X_n)$ is a sample of i.i.d. observations following the functional regression model (\ref{eq.linear-loeve}). In this section we give a consistent estimator of $\alpha$ based on Tikhonov regularization.

	The interpretation of $\alpha$ as a covariance given by Equation (\ref{eq.covariance}) suggests a natural way to estimate it. We could just use the sample covariance function,
	\[
	\tilde{\alpha}(t) = \frac{1}{n}\sum_{i=1}^n Y_iX_i(t).
	\] 
	Unfortunately, $\mathbb{P}(\tilde{\alpha} \in H_K)=0$ (see \cite{lukic2001stochastic}) whereas we are assuming $\alpha \in H_K$. Then, we need to apply a  regularization technique in order to take $\tilde{\alpha}$ into the RKHS. A possibility is to use Tikhonov regularization, which leads to the following estimator of $\alpha$:
	\[
	\check{\alpha} = \arg\min_{f\in H_K} \|\tilde{\alpha} - f\|^2_2 +  \gamma_n \|f\|^2_{K},
	\]
	where $\|\cdot\|_2$ denotes the norm in $L^2[0,1]$, $\|\cdot\|_K$ denotes the norm in $H_K$ and $\gamma_n>0$  is a sequence of regularization parameters depending on the sample size. It turns out that $\check{\alpha}$ has the following explicit expression: 
	\begin{equation}\label{plugin}
		\check{\alpha} = (\mathcal{K}+\gamma_n I)^{-1}\mathcal{K}\tilde{\alpha}, 
	\end{equation}
	where $\mathcal{K}$ is the integral operator defined by the kernel $K$;   see \citet[p. 139]{cucker2007learning}.  Note that \eqref{plugin} is a sort of ``oracle estimator'' since, in practice, $K$ is generally unknown so that $\mathcal{K}$  must be also estimated from the sample.   This leads to the  final estimator
	\begin{equation}\label{final}
		\hat{\alpha} := (\widehat{\mathcal{K}}+\gamma_n I)^{-1}\widehat{\mathcal{K}}\tilde{\alpha}, 
	\end{equation}
	where $\widehat{\mathcal{K}}$ is the integral operator defined by the kernel $\hat{K}(s,t)=n^{-1}\sum_{i=1}^n X_i(s)X_i(t)$.

	The consistency   of $\hat\alpha$ in $L^2$ norm  is stated in the following result:
	
	\begin{theorem}
		\label{th.consistency-tikhonov}
		Let $\gamma_n\to 0$ be  such that $\gamma_n^2\sqrt{n}\to \infty$ and assume that $\mathbb{E}(\|X\|_2^4) < \infty$. Then 
		$\|\hat{\alpha}-\alpha\|_2\to 0$ in probability, as $n\to \infty.$
		
	\end{theorem}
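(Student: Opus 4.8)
The plan is to decompose the error $\hat\alpha-\alpha$ into a ``population regularization bias'' term and a ``sampling fluctuation'' term, and control each separately. Write $\check\alpha=(\mathcal K+\gamma_n I)^{-1}\mathcal K\tilde\alpha$ for the oracle estimator of \eqref{plugin}, and recall from \eqref{eq.covariance} that $\alpha(t)=\mathbb E[YX(t)]$, so $\tilde\alpha$ is an unbiased estimator of $\alpha$ with $\mathbb E\|\tilde\alpha-\alpha\|_2^2=O(1/n)$ under the moment assumption $\mathbb E(\|X\|_2^4)<\infty$ (since $\mathrm{Var}(Y_iX_i(t))$ integrates to something finite by Cauchy--Schwarz and $\mathbb E(Y^2)<\infty$). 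Then split
\[
\hat\alpha-\alpha=\underbrace{(\hat\alpha-\check\alpha)}_{\text{operator estimation error}}+\underbrace{(\check\alpha-\alpha)}_{\text{regularization + sampling of }\tilde\alpha}.
\]
For the second term, further write $\check\alpha-\alpha=(\mathcal K+\gamma_n I)^{-1}\mathcal K(\tilde\alpha-\alpha)+\big[(\mathcal K+\gamma_n I)^{-1}\mathcal K-I\big]\alpha$. The first piece has $L^2$-norm bounded by $\|\tilde\alpha-\alpha\|_2$ (since $(\mathcal K+\gamma_n I)^{-1}\mathcal K$ has operator norm $\le 1$), hence is $O_{\mathbb P}(n^{-1/2})\to0$. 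The second piece is the deterministic Tikhonov bias: since $\alpha\in H_K=\mathcal K^{1/2}(L^2[0,1])$ by Proposition \ref{prop.RKHS-K}, write $\alpha=\mathcal K^{1/2}g$ and use the spectral calculus bound $\big\|\big[(\mathcal K+\gamma_n I)^{-1}\mathcal K-I\big]\mathcal K^{1/2}g\big\|_2=\big\|\gamma_n(\mathcal K+\gamma_n I)^{-1}\mathcal K^{1/2}g\big\|_2\le \tfrac12\sqrt{\gamma_n}\,\|g\|_2\to0$, because $\sup_{\lambda\ge0}\gamma_n\sqrt\lambda/(\lambda+\gamma_n)\le\tfrac12\sqrt{\gamma_n}$. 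So $\|\check\alpha-\alpha\|_2\to0$ in probability.

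The main work is the first term, $\hat\alpha-\check\alpha$, where the random operator $\widehat{\mathcal K}$ replaces $\mathcal K$. Using the resolvent identity,
\[
\hat\alpha-\check\alpha=(\widehat{\mathcal K}+\gamma_n I)^{-1}\widehat{\mathcal K}\tilde\alpha-(\mathcal K+\gamma_n I)^{-1}\mathcal K\tilde\alpha
=\gamma_n(\widehat{\mathcal K}+\gamma_n I)^{-1}(\widehat{\mathcal K}-\mathcal K)(\mathcal K+\gamma_n I)^{-1}\tilde\alpha,
\]
(the identity $A^{-1}B-C^{-1}D$ manipulations reduce to $(\widehat{\mathcal K}+\gamma_n I)^{-1}\gamma_n(\widehat{\mathcal K}-\mathcal K)(\mathcal K+\gamma_n I)^{-1}$ after writing $\widehat{\mathcal K}=(\widehat{\mathcal K}+\gamma_n I)-\gamma_n I$ and similarly for $\mathcal K$). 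Now bound in $L^2$-operator norm: $\|\gamma_n(\widehat{\mathcal K}+\gamma_n I)^{-1}\|\le1$, and $\|(\mathcal K+\gamma_n I)^{-1}\|\le\gamma_n^{-1}$, so
\[
\|\hat\alpha-\check\alpha\|_2\le \gamma_n^{-1}\,\|\widehat{\mathcal K}-\mathcal K\|_{\mathrm{HS}}\,\|\tilde\alpha\|_2.
\]
The Hilbert--Schmidt norm $\|\widehat{\mathcal K}-\mathcal K\|_{\mathrm{HS}}=\|\hat K-K\|_{L^2([0,1]^2)}$, and since $\hat K(s,t)=n^{-1}\sum_i X_i(s)X_i(t)$ is an average of i.i.d. kernels with mean $K$ and finite second moment (here is exactly where $\mathbb E\|X\|_2^4<\infty$ enters, guaranteeing $\mathbb E\|X\otimes X\|_{\mathrm{HS}}^2<\infty$), we get $\mathbb E\|\widehat{\mathcal K}-\mathcal K\|_{\mathrm{HS}}^2=O(1/n)$, i.e. $\|\widehat{\mathcal K}-\mathcal K\|_{\mathrm{HS}}=O_{\mathbb P}(n^{-1/2})$. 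Also $\|\tilde\alpha\|_2=O_{\mathbb P}(1)$. Therefore
\[
\|\hat\alpha-\check\alpha\|_2=O_{\mathbb P}\!\big(\gamma_n^{-1}n^{-1/2}\big),
\]
and the hypothesis $\gamma_n^2\sqrt n\to\infty$ is equivalent to $\gamma_n^{-1}n^{-1/2}=(\gamma_n^2\sqrt n)^{-1/2}\cdot\gamma_n^{-1/2}\cdot n^{-1/4}$... more simply, $\gamma_n^{-2}n^{-1}\to0$, so $\gamma_n^{-1}n^{-1/2}\to0$ and this term vanishes in probability.

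Combining the three estimates via the triangle inequality gives $\|\hat\alpha-\alpha\|_2\to0$ in probability. The step I expect to be the main obstacle — or at least the one requiring the most care — is the operator-perturbation bound for $\hat\alpha-\check\alpha$: one must be careful that $(\widehat{\mathcal K}+\gamma_n I)^{-1}$ is well-defined and uniformly bounded by $\gamma_n^{-1}$ (true since $\widehat{\mathcal K}\succeq0$), and that the resolvent identity is applied correctly so that one power of $\gamma_n^{-1}$, not two, survives; it is precisely this single surviving $\gamma_n^{-1}$, against the $n^{-1/2}$ rate of $\|\widehat{\mathcal K}-\mathcal K\|_{\mathrm{HS}}$, that forces the rate condition $\gamma_n^2\sqrt n\to\infty$. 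A secondary technical point is verifying the $O(1/n)$ rates for $\mathbb E\|\tilde\alpha-\alpha\|_2^2$ and $\mathbb E\|\widehat{\mathcal K}-\mathcal K\|_{\mathrm{HS}}^2$, which is where the fourth-moment hypothesis $\mathbb E(\|X\|_2^4)<\infty$ is used (together with $\mathbb E Y^2<\infty$, implicit in $Y\in L^2(\Omega)$), via Fubini and the i.i.d. structure.
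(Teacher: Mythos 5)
Your proof is correct, and the decisive step is handled by a genuinely different (and in fact sharper) device than the one in the paper. Both arguments share the same skeleton: a deterministic Tikhonov-bias term plus a stochastic term in which $\widehat{\mathcal K}$ and $\tilde\alpha$ replace $\mathcal K$ and $\alpha$. The difference is in how the operator perturbation is controlled. The paper bounds $\|\widehat{\mathcal T}_n-\mathcal T_n\|_{op}$ (with $\mathcal T_n=(\mathcal K+\gamma_n I)^{-1}\mathcal K$) by splitting off $(\widehat{\mathcal K}+\gamma_n I)^{-1}-(\mathcal K+\gamma_n I)^{-1}$ and invoking the generic inverse-perturbation inequality, which costs $\|(\mathcal K+\gamma_n I)^{-1}\|_{op}^2=\gamma_n^{-2}$ and is what forces the hypothesis $\gamma_n^2\sqrt n\to\infty$ against the $O_{\mathbb P}(n^{-1/2})$ rate of $\|\widehat{\mathcal K}-\mathcal K\|_{HS}$ (the paper's Hsing--Eubank citation; your i.i.d. Hilbert--Schmidt argument under $\mathbb E\|X\|_2^4<\infty$ gives the same). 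Your exact resolvent identity $\widehat{\mathcal T}_n-\mathcal T_n=\gamma_n(\widehat{\mathcal K}+\gamma_n I)^{-1}(\widehat{\mathcal K}-\mathcal K)(\mathcal K+\gamma_n I)^{-1}$ lets the factor $\|\gamma_n(\widehat{\mathcal K}+\gamma_n I)^{-1}\|_{op}\le 1$ absorb one power of $\gamma_n^{-1}$, so only a single $\gamma_n^{-1}$ survives and the weaker condition $\gamma_n\sqrt n\to\infty$ would already suffice; the theorem's hypothesis is stronger, so your argument certainly proves the statement. Likewise your closed-form bias bound $\tfrac12\sqrt{\gamma_n}\,\|g\|_2$ (using $\alpha=\mathcal K^{1/2}g\in H_K$, legitimate by Proposition \ref{prop.RKHS-K}) is a quantitative version of the paper's $\varepsilon$--$N$ truncation argument.

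One imprecision worth flagging: your claim that $\mathbb E\|\tilde\alpha-\alpha\|_2^2=O(1/n)$ ``by Cauchy--Schwarz and $\mathbb E(Y^2)<\infty$'' does not follow from the stated assumptions, since it requires $\mathbb E[Y^2\|X\|_2^2]<\infty$, and Cauchy--Schwarz would need a fourth moment of $Y$, which is not assumed (the model only gives $Y\in L^2(\Omega)$, and $\varepsilon$ is merely uncorrelated with $L_X$, not independent of $X$). This is harmless because nowhere do you actually need the $n^{-1/2}$ rate for $\tilde\alpha$: you only need $\|\tilde\alpha-\alpha\|_2\to 0$ (and hence $\|\tilde\alpha\|_2=O_{\mathbb P}(1)$), which follows from the strong law of large numbers in Hilbert space under $\mathbb E\big[|Y|\,\|X\|_2\big]\le \|Y\|\,(\mathbb E\|X\|_2^2)^{1/2}<\infty$ --- exactly the Mourier SLLN step used in the paper. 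With that substitution your proof is complete.
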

	
	\begin{proof}
		Let us define  $\mathcal{T}_n:=(\mathcal{K}+\gamma_n I)^{-1}\mathcal{K}$ and $\widehat{\mathcal{T}}_n:=(\widehat{\mathcal{K}}+\gamma_n I)^{-1}\widehat{\mathcal{K}}$. Then $\| \hat{\alpha} - \alpha\|_2 \leq \|\hat{\mathcal{T}_n}\tilde{\alpha} - \mathcal{T}_n\alpha\|_2 +  \|\mathcal{T}_n\alpha - \alpha\|_2$. We analyse both terms separately.
		
		\noindent{\em First term.} For the first term, $\|\hat{\mathcal{T}}_n\tilde{\alpha} - \mathcal{T}_n\alpha\|_2 \leq 
		\|\hat{\mathcal{T}}_n\tilde{\alpha} - \hat{\mathcal{T}}_n\alpha\|_2 + \|\hat{\mathcal{T}}_n\alpha - \mathcal{T}_n\alpha\|_2 \leq \|\hat{\mathcal{T}}_n\|_{op} \|\tilde{\alpha} -\alpha\|_2 + \|\hat{\mathcal{T}}_n-\mathcal{T}\|_{op} \|\alpha\|_2,$
		where $\|\cdot\|_{op}$ stands for the usual operator norm for linear continuous operators, $\|Tx\|_{op}=\sup_{\|x\|=1}\|Tx\|$. Now it is enough to prove $\|\tilde{\alpha} -\alpha\|_2 \to 0$ and $\|\hat{\mathcal{T}}_n-\mathcal{T}\|_{op}\to 0$  in probability.
		
		Convergence $\|\tilde{\alpha} -\alpha\|_2 \to 0$   in probability follows from Mourier's SLLN (see e.g. Theorem 4.5.2 in \cite{lahaprobability} p. 452). To prove $\|\hat{\mathcal{T}}_n-\mathcal{T}\|_{op}\to 0$   in probability, use triangle inequality to bound 
		$\|\hat{\mathcal{T}}_n-\mathcal{T}_n\|_{op}\leq \|(\hat{\mathcal{K}}+\gamma_n I)^{-1}\|_{op}\|\|\hat{\mathcal{K}}-\mathcal{K}\|_{op}+\|(\hat{\mathcal{K}}+\gamma_n I)^{-1}-(\mathcal{K}+\gamma_n I)^{-1}\|_{op}\|\mathcal{K}\|_{op}.$ 
		From Equation (1.14) in \cite{gohberg2004basic} p. 228,
		\begin{equation}
			\label{eq.b3}
			\|(\hat{\mathcal{K}}+\gamma_n I)^{-1}\|_{op}\leq \gamma_n^{-1},\  \ \mbox{for all}\ n > 0.
		\end{equation}
		Whenever $\|A^{-1}\|_{op}\|A-B\|_{op}<1$ the following inequality holds (\cite{gohberg2004basic}, p. 71)
		$\|A^{-1}-B^{-1}\|_{op}\leq (\|A^{-1}\|_{op}^2\|A-B\|_{op})(1-\|A^{-1}\|_{op} \|A-B\|_{op})^{-1}.$
		From this inequality, together with \eqref{eq.b3}, we get 
		\[
		\|(\hat{\mathcal{K}}+\gamma_n I)^{-1}-(\mathcal{K}+\gamma_n I)^{-1}\|_{op}\leq \frac{(1/\gamma_n)^2\|\hat{\mathcal{K}}-\mathcal{K}\|_{op}}{1-(1/\gamma_n)\|\hat{\mathcal{K}}-\mathcal{K}\|_{op}}.
		\]
		To prove that $(1/\gamma_n)^2\|\hat{\mathcal{K}}-\mathcal{K}\|_{op}\to 0$ in probability we will use that $\|\hat{\mathcal{K}}-\mathcal{K}\|_{op}\leq \|\hat{\mathcal{K}}-\mathcal{K}\|_{HS}$,   where $\|\cdot\|$ stands for the Hilbert-Schmidt norm for operators.
		Theorem 8.1.2 in \cite{hsing2015theoretical} p. 212, guarantees that if $\mathbb{E}(\|X\|_2^4)<\infty$, then $\sqrt{n}\|\hat{\mathcal{K}}-\mathcal{K}\|_{HS}$ is bounded in probability. Then, for all  $\epsilon>0$ there exists $M$ such that for $n$ large enough, $\mathbb{P}(\sqrt{n}\|\hat{\mathcal{K}}-\mathcal{K}\|_{HS}>M)<\epsilon$.
		Now, let us fix an arbitrary $\delta>0$.  Let $n$ be large enough such that $\sqrt{n}\gamma_n^2\delta>M$ (remember that we are assuming $\sqrt{n}\gamma_n^2\to\infty$). Then, 
		\[
		\mathbb{P}((1/\gamma_n)^2\|\hat{\mathcal{K}}-\mathcal{K}\|_{op}>\delta)\leq  \mathbb{P}(\sqrt{n}\|\hat{\mathcal{K}}-\mathcal{K}\|_{HS}>M)<\epsilon.
		\]
		
		\noindent{\em Second term.} The term  $\|\mathcal{T}_n\alpha - \alpha\|_2$ will be small for small $\gamma_n$. Then, we need to consider a sequence  $\gamma_n \downarrow 0$, but we must take into account that $\mathcal{K}$ is not invertible.
		
		Let $\{e_j\}_j$ be an orthonormal base of eigenvectors of $\mathcal{K}$ with associated eigenvalues $\{\lambda_j\}_j$. Let $\epsilon>0$ and $N=N(\epsilon)$ such that $\sum_{j=N+1}^\infty \langle \alpha,e_j\rangle_2^2<\epsilon/2$.
		Observe that 
		\[
		\mathcal{T}_n\alpha=\sum_{j=1}^\infty \frac{\lambda_j}{\gamma_n+\lambda_j} \langle \alpha, e_j\rangle_2 e_j\ \ \mbox{and}\ \ \sum_{j=N+1}^\infty \frac{\lambda_j^2}{(\gamma_n+\lambda_j)^2} \langle \alpha, e_j\rangle_2 ^2 < \epsilon/2.
		\] 
		Then, for large enough $n$, $\|\mathcal{T}_n\alpha-\alpha\|_2^2=\sum_{j=1}^N (\frac{\lambda_j}{\gamma_n+\lambda_j}-1)^2\langle \alpha,e_j  \rangle_2^2+\epsilon/2 < \epsilon.$ 
		Hence, $\|\mathcal{T}_n\alpha-\alpha\|_2^2\to 0$ as $n\to \infty$.
	\end{proof}
	
	We now establish the consistency of the simple plug-in estimator defined in \eqref{plugin}  in the RKHS norm. Note that this estimator requires the knowledge of the true covariance operator, which is seldom the case in practice. Still, the following result has perhaps some theoretical interest; see the discussion at the beginning of subsection \ref{subsec:discret}. 
	
	\begin{proposition}\label{prop:convrkhs} Let $\gamma_n\to 0$ such that  $n\gamma^2_n\to \infty$, then $||\check{\alpha}-\alpha||_K^2\to 0$ in probability.
	\end{proposition}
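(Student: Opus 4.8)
The plan is to pass to the orthonormal eigenbasis $\{e_j\}$ of $\mathcal{K}$ provided by the Spectral Theorem and to argue coordinate-wise, using the description of $H_K$ and of $\|\cdot\|_K$ from Proposition \ref{prop.RKHS-K}. Write $a_j=\langle\alpha,e_j\rangle_2$ and $\tilde a_j=\langle\tilde\alpha,e_j\rangle_2$. Since $\check\alpha=(\mathcal{K}+\gamma_n I)^{-1}\mathcal{K}\tilde\alpha$, its coordinates are $\langle\check\alpha,e_j\rangle_2=\tfrac{\lambda_j}{\lambda_j+\gamma_n}\tilde a_j$, so that $\|\check\alpha\|_K^2=\sum_j\tfrac{\lambda_j}{(\lambda_j+\gamma_n)^2}\tilde a_j^2\le\tfrac{1}{4\gamma_n}\|\tilde\alpha\|_2^2<\infty$ almost surely, using $\sup_{\lambda>0}\lambda/(\lambda+\gamma_n)^2=1/(4\gamma_n)$ and $\|\tilde\alpha\|_2\le n^{-1}\sum_i|Y_i|\,\|X_i\|_2<\infty$ a.s. Thus $\check\alpha\in H_K$ with probability one (unlike $\tilde\alpha$), which is what makes the quantity $\|\check\alpha-\alpha\|_K$ meaningful to begin with.

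Next I would split, exactly as in the proof of Theorem \ref{th.consistency-tikhonov}, into a bias part and a stochastic part: with $\mathcal{T}_n=(\mathcal{K}+\gamma_n I)^{-1}\mathcal{K}$,
\[
\|\check\alpha-\alpha\|_K^2\le 2\,\|\mathcal{T}_n(\tilde\alpha-\alpha)\|_K^2+2\,\|\mathcal{T}_n\alpha-\alpha\|_K^2 .
\]
The bias term is deterministic: in coordinates $\|\mathcal{T}_n\alpha-\alpha\|_K^2=\sum_j\tfrac{\gamma_n^2}{\lambda_j(\lambda_j+\gamma_n)^2}a_j^2$, each summand is bounded by $a_j^2/\lambda_j$ and tends to $0$ as $\gamma_n\to 0$, while $\sum_j a_j^2/\lambda_j=\|\alpha\|_K^2<\infty$ because $\alpha\in H_K$; dominated convergence then gives $\|\mathcal{T}_n\alpha-\alpha\|_K^2\to 0$. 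This is the $H_K$-analogue of the ``second term'' estimate in the proof of Theorem \ref{th.consistency-tikhonov}.

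For the stochastic part, the same bound on $\lambda/(\lambda+\gamma_n)^2$ yields $\|\mathcal{T}_n(\tilde\alpha-\alpha)\|_K^2=\sum_j\tfrac{\lambda_j}{(\lambda_j+\gamma_n)^2}(\tilde a_j-a_j)^2\le\tfrac{1}{4\gamma_n}\|\tilde\alpha-\alpha\|_2^2$. Now $\tilde\alpha$ is the empirical mean of the i.i.d. $L^2[0,1]$-valued variables $Y_iX_i$, whose common mean is $\alpha$ by \eqref{eq.covariance}, so under the standing moment assumptions of the section (which ensure $\mathbb{E}\|YX\|_2^2<\infty$) one has $\mathbb{E}\|\tilde\alpha-\alpha\|_2^2=n^{-1}(\mathbb{E}\|YX\|_2^2-\|\alpha\|_2^2)\le C/n$. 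By Markov's inequality, $\mathbb{P}(\tfrac{1}{4\gamma_n}\|\tilde\alpha-\alpha\|_2^2>\delta)\le C/(4\delta\, n\gamma_n)$ for every $\delta>0$; and since $\gamma_n\le 1$ eventually we have $n\gamma_n\ge n\gamma_n^2\to\infty$, so this probability tends to $0$. Combining the two parts gives $\|\check\alpha-\alpha\|_K^2\to 0$ in probability.

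Most of this is routine spectral bookkeeping; the step that needs a little care is the first one — checking that Tikhonov regularization genuinely places $\check\alpha$ inside $H_K$ almost surely, which legitimizes the whole $\|\cdot\|_K$-decomposition. The only hidden ingredient is the $O_P(1/n)$ rate for $\|\tilde\alpha-\alpha\|_2$: a moment condition guaranteeing $\mathbb{E}\|YX\|_2^2<\infty$ is needed, since plain $L^2$-consistency of $\tilde\alpha$ without a rate would be defeated by the $\gamma_n^{-1}$ factor in the stochastic term (this is also why the hypothesis is stated as $n\gamma_n^2\to\infty$, though in fact $n\gamma_n\to\infty$ would already suffice for this argument).
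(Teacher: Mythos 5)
Your proof is correct, but it takes a genuinely different and in some respects cleaner route than the paper's. The paper splits $\check\alpha-\alpha$ into four pieces by choosing a cutoff index $N$ with $\sum_{j>N}\langle\alpha,e_j\rangle_2^2/\lambda_j<\epsilon$ and handling head and tail separately, whereas you use the direct bias--variance decomposition $\check\alpha-\alpha=\mathcal{T}_n(\tilde\alpha-\alpha)+(\mathcal{T}_n\alpha-\alpha)$ and treat each piece globally in the eigenbasis. Two concrete differences are worth noting. First, for the stochastic term the paper bounds $\lambda_j/(\gamma_n+\lambda_j)^2\le\lambda_j/\gamma_n^2$ and then invokes the trace-class property $\sum_j\lambda_j<\infty$, which produces the rate $1/(n\gamma_n^2)$ and hence the hypothesis $n\gamma_n^2\to\infty$; your uniform bound $\sup_{\lambda>0}\lambda/(\lambda+\gamma_n)^2=1/(4\gamma_n)$ avoids the trace-class fact entirely and yields the weaker sufficient condition $n\gamma_n\to\infty$, so your argument actually proves a slightly stronger statement under the stated hypotheses. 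Second, you explicitly verify that $\mathcal{T}_n$ maps $L^2$ into $H_K$ almost surely, so that $\|\check\alpha-\alpha\|_K$ is well defined --- a point the paper leaves implicit. Both arguments rest on the same unstated moment assumption: you need $\mathbb{E}\|YX\|_2^2<\infty$ for the $O_P(1/n)$ rate of $\|\tilde\alpha-\alpha\|_2^2$, exactly as the paper needs it when it appeals to the Hilbert-space central limit theorem to assert that $n\|\tilde\alpha-\alpha\|_2^2$ is bounded in probability; you are at least candid about this, while the paper is not. Your dominated-convergence treatment of the bias term is also a tidier substitute for the paper's $\epsilon/2$-splitting. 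No gaps.
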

	
	\begin{proof}  To prove that $\|\check{\alpha}-\alpha\|_K\to0$ in probability, observe that since $\alpha\in H_K$, for all $\epsilon>0$ there exists $N=N(\epsilon)$ such that
		\begin{equation}
			\label{eq1}
			\sum_{j=N+1}^\infty \frac{1}{\lambda_j}\langle \alpha,e_j\rangle_2^2<\epsilon.
		\end{equation}
		
		For this value of $N$ we have
		\begin{multline}
			\label{bound}
			\|\check{\alpha}-\alpha\|_K \leq \Big\|\sum_{j=1}^N \Big(\frac{\lambda_j}{\gamma_n+\lambda_j}-1\Big)\langle\tilde{\alpha},e_j\rangle_2e_j\Big\|_K+\Big\|\sum_{j=1}^N \langle\tilde{\alpha}-\alpha,e_j\rangle_2e_j\Big\|_K\\+\Big\|\sum_{j=N+1}^\infty \frac{\lambda_j}{\gamma_n+\lambda_j}\langle\tilde{\alpha}-\alpha,e_j\rangle_2 e_j\Big\|_K
			+\Big\|\sum_{j=N+1}^\infty\Big(\frac{\lambda_j}{\gamma_n+\lambda_j}-1\Big)\langle\alpha,e_j\rangle_2 e_j\Big\|_K.
		\end{multline}
		We will look at each term in the expression above. For the first one, we have:
		\begin{multline*}
			\Big\|\sum_{j=1}^N \Big(\frac{\lambda_j}{\gamma_n+\lambda_j}-1\Big)\langle\tilde{\alpha},e_j\rangle_2e_j\Big\|_K\leq 
			\Big\|\sum_{j=1}^N \Big(\frac{\lambda_j}{\gamma_n+\lambda_j}-1\Big)\langle\tilde{\alpha}-\alpha,e_j\rangle_2e_j\Big\|_K+\\
			\Big\|\sum_{j=1}^N \Big(\frac{\lambda_j}{\gamma_n+\lambda_j}-1\Big)\langle\alpha,e_j\rangle_2e_j\Big\|_K.
		\end{multline*}
		Now, observe that $\|\tilde{\alpha}-\alpha\|_2\to 0$ a.s., and let us define
		\[
		C_{N,n} := \max_{j=1,\ldots,N} \Big(\frac{\lambda_j}{\gamma_n+\lambda_j}-1\Big)^2 \|e_j\|_K^2 = \Big(\frac{\lambda_1}{\gamma_n+\lambda_1}-1\Big)^2 \frac{1}{\lambda_N} \to 0,\ \ \mbox{as}\ \ n\to\infty.
		\] 
		Then, for large enough $n$, with probability one,
		\[
		\Big\|\sum_{j=1}^N \Big(\frac{\lambda_j}{\gamma_n+\lambda_j}-1\Big)\langle\tilde{\alpha}-\alpha,e_j\rangle_2e_j\Big\|^2_K\leq N C_{N,n}\|\tilde{\alpha}-\alpha\|^2_2 <\epsilon.
		\]
		We have used Cauchy-Schwarz inequality in the first inequality above. Similarly, we also have, for large enough $n$,
		\[
		\Big\|\sum_{j=1}^N \Big(\frac{\lambda_j}{\gamma_n+\lambda_j}-1\Big)\langle\alpha,e_j\rangle_2e_j\Big\|_K\leq N C_{N,n}\|\alpha\|^2_2 <\epsilon,
		\]
		The second term in (\ref{bound}) satisfies $\|\sum_{j=1}^N \langle\tilde{\alpha}-\alpha,e_j\rangle_2e_j\|^2_K \leq N\lambda_N^{-1}\|\tilde{\alpha}-\alpha\|^2_2<\epsilon,$ for large enough $n$, with probability one.
		For the third term in (\ref{bound}), let $\sum_j \lambda_{j=1}^\infty:=C<\infty$. Then,
		\begin{multline*}
			\Big\|\sum_{j=N+1}^\infty \frac{\lambda_j}{\gamma_n+\lambda_j}\langle\tilde{\alpha}-\alpha,e_j\rangle_2 e_j\Big\|^2_K=
			\sum_{j=N+1}^\infty \frac{\lambda_j}{(\gamma_n+\lambda_j)^2}\langle\tilde{\alpha}-\alpha,e_j\rangle_2^2\\
			\leq \frac{||\tilde{\alpha}-\alpha||_2^2}{\gamma_n^2}\sum_{j=N+1}^\infty \lambda_j \leq C\frac{n||\tilde{\alpha}-\alpha||_2^2}{n\gamma_n^2}\to 0,
		\end{multline*}
		in probability, since we are assuming $n\gamma_n^2\to\infty$ and  $n\|\tilde{\alpha}-\alpha\|_2^2$ is bounded in probability by the Central Limit Theorem. Finally, the fourth term in (\ref{bound}) is also bounded by $\epsilon$ using \eqref{eq1}:
		\[
		\Big\|\sum_{j=N+1}^\infty\Big(\frac{\lambda_j}{\gamma_n+\lambda_j}-1\Big)\langle\alpha,e_j\rangle_2 e_j\Big\|^2_K=\sum_{j=N+1}^\infty 
		\frac{1}{\lambda_j}\Big(\frac{\gamma_n}{\gamma_n+\lambda_j}\Big)^2\langle\alpha,e_j\rangle_2^2\leq \sum_{j=N+1}^\infty 
		\frac{1}{\lambda_j}\langle\alpha,e_j\rangle_2^2<\epsilon.
		\]

	\end{proof}
	
	\subsection{An RKHS-consistent estimator based on discretization} \label{subsec:discret}
	
	The analysis of the proof of Proposition \ref{prop:convrkhs},  concerning consistency of the ``oracle estimator'' $\check{\alpha}$,   suggests an obvious line of attack for the problem of establishing   the RKHS consistency for the more realistic estimator $\hat\alpha$ defined in \eqref{final}: one could think of just replacing the eigenvalues and eigenfunctions, $\lambda_i$ and $e_i$, with the natural empirical estimators. Nevertheless such strategy suffers from a serious practical problem: the eigenvalues $\lambda_i$ appear in the denominator of the RKHS norm; so, as the sequence of eigenvalues tend to zero, a very small error in the estimation of the $\lambda_i$ could have a huge effect in the estimation of the norm. As a consequence, the convergence condition (\ref{eq1}) looks quite difficult to ensure from an analogous inequality based on the estimated $\lambda_i$ and $e_i$. 
	
	Therefore, we will try here a different approach: instead of approximating the eigen-structure we will rather consider a discrete approximation of the linear model itself, taking advantage of the RKHS structure (unlike the ``direct'' approach based on the estimation of the $\lambda_i$ and $e_i$). More specifically, we will approximate the RKHS model \eqref{eq.linear-loeve} by a sequence of finite dimensional models, of type of those considered in Proposition \ref{prop.variable-selection}, based on  $p_n$-dimensional marginals $(X(t_{1,p}),\ldots,X(t_{p,p}))$, obtained by evaluating the process $X(t)$ at the grid points $T_p=\{t_{i,p}\}$, where $p=p_n$. The corresponding sequence of least squares estimators of the slope function $\hat \alpha_p$ will hopefully provide a consistent sequence of estimators of the true slope function $\alpha$ in \eqref{eq.linear-loeve}. This idea is next formalized.

	We will use the following lemma (which follows from Theorem 6E in \cite{parzen59}), that states that function $\alpha$ can be approximated (in the RKHS norm) by a finite linear combination of the kernel function $K$, evaluated at points of a partition of $[0,1]$.

	\begin{lemma} \label{parzen} Let $\alpha\in H_K$. Let us consider $T_p=\{t_{j,p}:\, j=1,\ldots,p\}$ where $0\leq t_{1,p}\leq \dots\leq t_{p,p}\leq 1$, an increasing sequence of partitions of $[0,1]$, i.e,  $T_p\subset T_{p+1}$, such that $\overline{\cup_pT_p}=[0,1]$. Then, there exist $\beta_{1,p}\dots,\beta_{p,p}$ such that,
		$\left\|\alpha(\cdot)-\sum_{j=1}^p \beta_{j,p} K(t_{j,p},\cdot)\right\|_K^2\rightarrow 0,\ \mbox{as } p\to\infty.$ 
	\end{lemma}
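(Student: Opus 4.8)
The plan is to obtain the required finite linear combination as the orthogonal projection of $\alpha$ onto a suitable finite-dimensional subspace of $H_K$, and then to prove that these projections converge to $\alpha$ in the RKHS norm because the union of the subspaces is dense in $H_K$.

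First I would introduce, for each $p$, the subspace $H_p\subset H_K$ spanned by $\{K(t_{j,p},\cdot):\, j=1,\ldots,p\}$. Being finite-dimensional, $H_p$ is closed, and since $T_p\subset T_{p+1}$ we have $\{K(t_{j,p},\cdot):\,j\}\subseteq\{K(t_{j,p+1},\cdot):\,j\}$, so the sequence is nested, $H_p\subseteq H_{p+1}$. Let $\Pi_p$ denote the orthogonal projection of $H_K$ onto $H_p$ and put $\alpha_p:=\Pi_p\alpha$. Because $\alpha_p\in H_p$, there are real numbers $\beta_{1,p},\ldots,\beta_{p,p}$ with $\alpha_p=\sum_{j=1}^p\beta_{j,p}K(t_{j,p},\cdot)$, and, by the variational characterization of the projection, $\|\alpha-\alpha_p\|_K=\mathrm{dist}_K(\alpha,H_p)$ is the smallest error attainable within $H_p$. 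It thus suffices to show $\|\alpha-\Pi_p\alpha\|_K\to0$ as $p\to\infty$.

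For this I would use the elementary Hilbert-space fact that if $H_1\subseteq H_2\subseteq\cdots$ is an increasing sequence of closed subspaces of a Hilbert space whose union $\bigcup_pH_p$ is dense, then $\Pi_p x\to x$ for every $x$ (given $\epsilon>0$ choose $y\in\bigcup_pH_p$ with $\|x-y\|<\epsilon$; then $y\in H_p$ for all large $p$, so $\|x-\Pi_p x\|\le\|x-y\|<\epsilon$). Applied with $x=\alpha\in H_K$ this gives exactly what we need, provided we verify that $\overline{\bigcup_pH_p}=H_K$.

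The density statement $\overline{\bigcup_pH_p}=H_K$ is the only substantive step, and it is where the continuity of $K$ enters. By definition $H_K$ is the closure in $\|\cdot\|_K$ of the linear span of $\{K(\cdot,t):\, t\in[0,1]\}$, so it is enough to check that each $K(\cdot,t)$, $t\in[0,1]$, lies in $\overline{\bigcup_pH_p}$. The reproducing property gives $\|K(\cdot,t)-K(\cdot,s)\|_K^2=K(t,t)-2K(s,t)+K(s,s)$, which tends to $0$ as $s\to t$ by continuity of $K$; hence $t\mapsto K(\cdot,t)$ is continuous from $[0,1]$ into $H_K$. Since $\bigcup_pT_p$ is dense in $[0,1]$ (that is what $\overline{\cup_pT_p}=[0,1]$ means), for any $t\in[0,1]$ we may pick $s_n\in\bigcup_pT_p$ with $s_n\to t$; then $K(\cdot,s_n)\in\bigcup_pH_p$ and $K(\cdot,s_n)\to K(\cdot,t)$ in $H_K$, so $K(\cdot,t)\in\overline{\bigcup_pH_p}$. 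This yields $\overline{\bigcup_pH_p}\supseteq H_K$, and the reverse inclusion is trivial, so equality holds; alternatively, this density is precisely the content of Theorem 6E in \cite{parzen59}. Combining the three steps we conclude $\|\alpha(\cdot)-\sum_{j=1}^p\beta_{j,p}K(t_{j,p},\cdot)\|_K^2\to0$, as claimed. The main obstacle is simply making this density argument airtight; everything else is routine Hilbert-space bookkeeping.
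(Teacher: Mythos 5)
Your proof is correct; the paper does not actually prove this lemma but simply invokes Theorem 6E of \cite{parzen59}, whose content is precisely the density step you establish (that $\{K(\cdot,t):t\in\cup_pT_p\}$ spans $H_K$ when $\cup_pT_p$ is dense), so your argument amounts to a self-contained version of the same approach via nested orthogonal projections. The only point worth flagging is that your density step relies on the continuity of $K$ (through $\|K(\cdot,t)-K(\cdot,s)\|_K^2=K(t,t)-2K(s,t)+K(s,s)\to 0$), which is not listed among the hypotheses of the lemma but is a standing assumption elsewhere in the paper, and you correctly identify exactly where it enters.
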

	
	Now our estimator is defined by the ordinary least squares estimator of the coefficients $\beta_{1,p},\dots,\beta_{p,p}$. To be more precise, let us denote
	\begin{equation}\label{eq.alphap}
		\alpha_{p}(\cdot) =\sum_{j=1}^{p}\beta_{j,p} K(t_{j,p},\cdot) \mbox{ and } \hat{\alpha}_p(\cdot)=\sum_{j=1}^{p} \hat{\beta}_{j,p}K(t_{j,p},\cdot),
	\end{equation}
	where $t_{1,p},\dots,t_{p,p}$ are chosen as indicated in Lemma \ref{parzen} and, for $j=1,\dots,p$, $\hat{\beta}_{j,p}$ are the ordinary least squares estimators (based on a sample of size $n$) of the regression coefficients in the $p$-dimensional  linear model
	 \begin{equation}\label{approx}
		Y_i=\sum_{j=1}^{p} \beta_{j,p} X(t_{j,p})+e_{i,p}=\langle \alpha_{p},X\rangle_K +e_{i,p},\ \ \ i=1,\ldots,n.
	\end{equation} \color{black}

	To prove the almost sure consistency of the estimator we will need to impose a condition of sub-Gaussianity.  Let us recall that a random variable $Y$ with ${\mathbb E}(Y)=0$ is said to be sub-Gaussian with (positive) proxy constant $\sigma^2$ (we will denote $Y\in SG(\sigma^2)$) if the moment generating function of $Y$ satisfies ${\mathbb E}\left(\exp(sY)\right)\leq \exp(\sigma^2s^2/2)$, for all $s\in {\mathbb R}$. It can be seen that the tails of a random variable $Y\in SG(\sigma^2)$ are lighter than or equal to those of a Gaussian distribution with variance $\sigma^2$, \textit{i.e.}  ${\mathbb P}(|Y|>t)\leq 2\exp(-t^2/(2\sigma^2))$ for all $t>0$. A $p$-dimensional random vector $X$ is said to be sub-Gaussian with proxy constant $\sigma^2$ if $X^\prime v\in SG(\sigma^2)$ for all $v\in {\mathbb R}^d$ with $\|v\|=1$. Observe that if $X$ is a $p$-dimensional random variable $X=(X_1,\ldots,X_p)$ and the $X_i$ are independent  with $X_i\in SG(\sigma^2)$ and sub-Gaussian then $X$ is sub-Gaussian with proxy constant $\sigma^2$ as well. 
	See \cite[Ch. 1]{rigollet2017} for details.

	\begin{theorem} \label{rkhsnorm} Assume the RKHS-based linear model  	$Y_i=\langle X,\alpha\rangle_K+ \epsilon_i$   for $i=1,\ldots,n$, as defined in \eqref{eq.linear-model}, \eqref{eq.linear-loeve} or \eqref{eq:parzen_model}.   Let  us consider a sequence of approximating $p$-dimensional models (with $p=p_n$) as defined in \eqref{approx}. Assume that  
		
		\begin{itemize}
			\item[(i)]    The error variables $e_{i,p}=e_i$ in the p-dimensional models are iid and sub-Gaussian, $SG(\sigma^2_p)$ with $ \sigma^2_p\geq C_0$ for all $p$ and some $C_0>0$. 
			
			\item[(ii)] The random variable  $\sup_{t\in [0,1]} X(t)$ is  sub-Gaussian as well.
			
			\item[(iii)] We have $p\to\infty$, as $n\to\infty$, in such a way that $n(\gamma_{p,p})^2/(p^2\log^3 n)\to \infty$, where $\gamma_{p,p}$ is the smallest eigenvalue of the covariance matrix $K_{T_p}$, of $(X(t_{1,p}),\ldots,X(t_{p,p}))$.
		\end{itemize}
		
		Then,  $\nu_n\|\hat{\alpha}_p-\alpha_p\|_K^2\to 0$ almost surely (a.s.), for all $\nu_n\to \infty$ such that
		$n\gamma_{p,p}/(p^2\nu_n\log n)\to \infty$. In addition, as a consequence of Lemma \ref{parzen},
		$\|\alpha-\hat{\alpha}_p\|_K^2=\max\{\nu_n^{-1},\mathcal{O}(\|\alpha-\alpha_p\|_K^2)\}\  \mbox{a.s.}$
	\end{theorem}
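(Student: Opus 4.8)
The plan is to reduce the RKHS discrepancy $\|\hat\alpha_p-\alpha_p\|_K^2$ to a finite‑dimensional quantity built from the kernel Gram matrix at the grid $T_p$ and the ordinary least squares error, to bound that quantity via a matrix‑concentration argument for the empirical covariance and a sub‑Gaussian bound for the noise cross‑term, and finally to upgrade the resulting rates to almost sure convergence by Borel--Cantelli, conditions (i)--(iii) providing exactly the summability needed. Concretely, let $\mathbb{X}$ be the $n\times p$ design matrix with rows $(X_i(t_{1,p}),\dots,X_i(t_{p,p}))$, $\mathbf{e}=(e_{1,p},\dots,e_{n,p})^\top$, $K_{T_p}=(K(t_{j,p},t_{k,p}))_{j,k}$ and $\widehat\Sigma_p=n^{-1}\mathbb{X}^\top\mathbb{X}$, an unbiased estimator of $K_{T_p}$ (recall $\gamma_{p,p}>0$ by (iii), so $K_{T_p}$ is invertible). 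By Lo\`eve's isometry and the reproducing property, the coefficients $\beta_{j,p}$ of Lemma \ref{parzen} are the population regression coefficients of $Y$ on $(X(t_{1,p}),\dots,X(t_{p,p}))$, so $\mathbb{E}(e_{i,p}X(t_{j,p}))=0$ for all $j$ and, $\widehat\beta_p$ being the OLS estimator, $\widehat\beta_p-\beta_p=(\mathbb{X}^\top\mathbb{X})^{-1}\mathbb{X}^\top\mathbf{e}$. Since $\langle K(t_{j,p},\cdot),K(t_{k,p},\cdot)\rangle_K=K(t_{j,p},t_{k,p})$, equation \eqref{eq.alphap} gives $\|\hat\alpha_p-\alpha_p\|_K^2=(\widehat\beta_p-\beta_p)^\top K_{T_p}(\widehat\beta_p-\beta_p)$, i.e.
\[
\|\hat\alpha_p-\alpha_p\|_K^2=\frac{1}{n^2}\,(\mathbb{X}^\top\mathbf{e})^\top\,\widehat\Sigma_p^{-1}K_{T_p}\widehat\Sigma_p^{-1}\,(\mathbb{X}^\top\mathbf{e}).
\]

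The core step is to control $\widehat\Sigma_p$. By (ii) the marginals $X(t_{j,p})$ are sub‑Gaussian uniformly in $j,p$, hence the entries $X_i(t_{j,p})X_i(t_{k,p})-K(t_{j,p},t_{k,p})$ are centred sub‑exponential with parameter bounded uniformly in $j,k,p$; a Bernstein inequality together with a union bound over the $p^2$ entries and over $n$ yields $\|\widehat\Sigma_p-K_{T_p}\|_{op}\le\|\widehat\Sigma_p-K_{T_p}\|_{HS}=\mathcal{O}(p\sqrt{\log n/n})$ a.s. By (iii) this bound is $o(\gamma_{p,p})$, so for $n$ large, a.s., $\tfrac12 K_{T_p}\preceq\widehat\Sigma_p\preceq\tfrac32 K_{T_p}$; in particular $\mathbb{X}^\top\mathbb{X}$ is invertible, $\lambda_{\min}(\widehat\Sigma_p)\ge\tfrac12\gamma_{p,p}$, and for any $v$, setting $w=\widehat\Sigma_p^{-1}v$, $v^\top\widehat\Sigma_p^{-1}K_{T_p}\widehat\Sigma_p^{-1}v=w^\top K_{T_p}w\le 2\,w^\top\widehat\Sigma_p w=2\,v^\top\widehat\Sigma_p^{-1}v\le 4\gamma_{p,p}^{-1}\|v\|^2$. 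Substituting into the display above, $\|\hat\alpha_p-\alpha_p\|_K^2\le 4(n^2\gamma_{p,p})^{-1}\|\mathbb{X}^\top\mathbf{e}\|^2$ a.s. for $n$ large.

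It remains to bound $\|\mathbb{X}^\top\mathbf{e}\|^2=\sum_{j=1}^p(\sum_{i=1}^n X_i(t_{j,p})e_{i,p})^2\le p\max_j(\sum_i X_i(t_{j,p})e_{i,p})^2$. Conditioning on the design and using (i) (iid $SG(\sigma_p^2)$ errors, uncorrelated with the regressors), each $\sum_i X_i(t_{j,p})e_{i,p}$ is conditionally sub‑Gaussian with proxy $\sigma_p^2\sum_i X_i(t_{j,p})^2$, and $n^{-1}\sum_i X_i(t_{j,p})^2=\widehat\Sigma_{p,jj}$ is bounded uniformly in $j$ for $n$ large by the previous step; a union bound over $j$ and over $n$ then gives $\max_j|\sum_i X_i(t_{j,p})e_{i,p}|=\mathcal{O}(\sigma_p\sqrt{n\log n})$ a.s., hence $\|\mathbb{X}^\top\mathbf{e}\|^2=\mathcal{O}(\sigma_p^2\,p\,n\log n)$ a.s. Combining, $\|\hat\alpha_p-\alpha_p\|_K^2=\mathcal{O}(\sigma_p^2\,p\log n/(n\gamma_{p,p}))$ a.s.; since $\sigma_p^2$ stays bounded (by Lemma \ref{parzen} the $e_{i,p}$ have variance $\sigma^2+\|\alpha-\alpha_p\|_K^2\to\sigma^2$, and the proxy inherits this bound), multiplying by any $\nu_n\to\infty$ with $n\gamma_{p,p}/(p^2\nu_n\log n)\to\infty$ (which is in fact more than enough, only $p$ rather than $p^2$ being needed here) gives $\nu_n\|\hat\alpha_p-\alpha_p\|_K^2\to0$ a.s. Finally $\|\alpha-\hat\alpha_p\|_K^2\le 2\|\alpha-\alpha_p\|_K^2+2\|\alpha_p-\hat\alpha_p\|_K^2$, where the first term is $\mathcal{O}(\|\alpha-\alpha_p\|_K^2)$ (and $\to0$ by Lemma \ref{parzen}) and the second is $o(\nu_n^{-1})$ a.s. by what precedes, which is the second assertion.

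The main obstacle is the core step: one needs $\widehat\Sigma_p$ to approximate $K_{T_p}$ in a \emph{relative} sense, $\|\widehat\Sigma_p-K_{T_p}\|_{op}=o(\gamma_{p,p})$, in a regime where $p\to\infty$ and the smallest eigenvalue $\gamma_{p,p}\to0$ simultaneously, and, moreover, \emph{almost surely}; this forces explicit sub‑Gaussian and sub‑exponential tail bounds with union bounds over the growing dimension and over $n$, and it is precisely the calibration of the resulting polynomial‑in‑$p$ and polylogarithmic‑in‑$n$ factors against condition (iii) that closes the argument. A secondary delicate point is to keep the proxy constant $\sigma_p^2$ of the $p$‑dimensional errors under control (the role of (i)), and to exploit the noise structure of those errors, not merely their uncorrelatedness with the regressors, in the cross‑term bound.
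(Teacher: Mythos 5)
Your proof is correct in substance and shares the paper's overall skeleton (reduce to the quadratic form $(\hat\beta_p-\beta_p)'K_{T_p}(\hat\beta_p-\beta_p)$, concentrate $n^{-1}\mathcal{X}_p'\mathcal{X}_p$ around $K_{T_p}$ at scale $o(\gamma_{p,p})$, control the noise cross-term by sub-Gaussian tails, close with Borel--Cantelli), but it diverges at two key points and is arguably cleaner and sharper. The paper bounds $K_{T_p}\preceq \|K_{T_p}\|_{op}I$, which forces it to prove that $\|K_{T_p}\|_{op}/p\to\lambda_1$ (its Lemma \ref{lemaux3}, a nontrivial Arzel\`a--Ascoli argument) and to reduce everything to $\|\hat\beta_p-\beta_p\|_2^2$; the noise is then handled through the OLS basic inequality $\|\mathcal{X}_p(\hat\beta_p-\beta_p)\|_2^2\le 2\mathbf{e}'\mathcal{X}_p(\hat\beta_p-\beta_p)$ followed by a $1/2$-covering of $S^{p-1}$ with $5^{p-1}$ points and a union bound over that exponentially large net. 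You instead keep the closed form $\hat\beta_p-\beta_p=(\mathcal{X}_p'\mathcal{X}_p)^{-1}\mathcal{X}_p'\mathbf{e}$, use the two-sided sandwich $\tfrac12 K_{T_p}\preceq\widehat\Sigma_p\preceq\tfrac32 K_{T_p}$ to collapse $\widehat\Sigma_p^{-1}K_{T_p}\widehat\Sigma_p^{-1}\preceq 4\gamma_{p,p}^{-1}I$, and union-bound the noise over only $p$ coordinates. This avoids Lemma \ref{lemaux3} and the exponential net entirely, and it buys a genuinely better rate: your bound $\mathcal{O}(p\log n/(n\gamma_{p,p}))$ needs only $n\gamma_{p,p}/(p\,\nu_n\log n)\to\infty$, whereas the paper's loss of a factor $\|K_{T_p}\|_{op}\asymp\lambda_1 p$ (and the $5^{p-1}$ net) is exactly what produces the $p^2$ in its hypothesis. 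Your identification of $\beta_{j,p}$ as the population projection coefficients (so that $\mathbb{E}(e_{i,p}X(t_{j,p}))=0$) is correct via Lo\`eve's isometry. Two caveats, both of which the paper's own proof shares rather than resolves: the conditional sub-Gaussianity of $\sum_i X_i(t_{j,p})e_{i,p}$ given the design presumes more than assumption (i) literally states (the $e_{i,p}$ are only guaranteed uncorrelated with the regressors, and they contain the in-$L_X$ approximation error $\Psi_X^{-1}(\alpha-\alpha_p)$); and you need $\sigma_p^2$ bounded \emph{above}, while (i) only states a lower bound --- your variance computation $\sigma^2+\|\alpha-\alpha_p\|_K^2$ motivates but does not by itself bound the proxy constant. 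Neither point is a gap relative to the paper's argument.
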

	The proof of this theorem is deferred to the appendix as it is a bit more technical than those of the previous results in the paper. 
	Hypothesis (ii) in Theorem \ref{rkhsnorm} is satisfied for the case of stochastic processes with stationary and independent increments and equispaced impact points $t_{j,p}$, as it is stated in the following proposition.

	\begin{proposition} \label{levy} Let $\{W(t)\}_{t\in [0,1]}$ be a stochastic process with stationary and independent increments, such that $\mathbb{E}(W^2(t))<\infty$ and $\mathbb{E}(W(t))=0$ for all $t\in [0,1]$,  then for all $\delta>0$, $p^{1+\delta}\gamma_{p,p}\to \infty$, $\gamma_{p,p}$being  the smallest eigenvalue of $K_{T_p}$, the covariance matrix of the random vector $(W(1/p),\dots,W(1))$.
		
	\end{proposition}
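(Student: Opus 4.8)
The plan is to reduce the statement to a concrete linear-algebra estimate for the covariance matrix of the discretized trajectory, which turns out to be a scalar multiple of the classical ``$\min$'' matrix.

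First I would pin down the second-order structure of $W$. Setting $g(t):=\mathbb{E}(W(t)^2)=\mathrm{Var}(W(t))$ and using stationarity and independence of the increments (together with $W(0)=0$, as is standard for such processes), the decomposition $W(s+t)=W(s)+(W(s+t)-W(s))$ into independent summands with $W(s+t)-W(s)\stackrel{d}{=}W(t)$ gives $g(s+t)=g(s)+g(t)$ whenever $s+t\le 1$. Since $g$ is moreover non-negative and non-decreasing, Cauchy's functional equation forces $g(t)=ct$ with $c:=\mathrm{Var}(W(1))\ge 0$; excluding the trivial degenerate case $c=0$ (in which $W\equiv 0$ and the statement is vacuous), the same decomposition yields $\mathrm{Cov}(W(s),W(t))=\mathbb{E}(W(s)^2)=cs$ for $0\le s\le t\le1$, i.e. the covariance kernel is $K(s,t)=c\min(s,t)$, the Brownian kernel up to the factor $c$.

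Consequently the covariance matrix of $(W(1/p),\dots,W(1))$ is $K_{T_p}=\tfrac{c}{p}M_p$, where $M_p$ is the $p\times p$ matrix with entries $(M_p)_{ij}=\min(i,j)$, so that $\gamma_{p,p}=\tfrac{c}{p}\,\lambda_{\min}(M_p)$. It then suffices to prove a uniform lower bound $\lambda_{\min}(M_p)\ge c_0>0$. For this I would use the identity $\min(i,j)=\sum_{k=1}^{p}\mathbf{1}\{k\le i\}\,\mathbf{1}\{k\le j\}$, which gives, for any $v\in\mathbb{R}^p$, $v^\top M_p v=\sum_{k=1}^{p}s_k^2$ with $s_k:=v_k+v_{k+1}+\cdots+v_p$; writing $v_k=s_k-s_{k+1}$ (with $s_{p+1}:=0$) one gets $\|v\|^2=\sum_k(s_k-s_{k+1})^2\le 4\sum_k s_k^2=4\,v^\top M_p v$, hence $\lambda_{\min}(M_p)\ge \tfrac14$. (Alternatively, $M_p^{-1}$ is the tridiagonal second-difference matrix whose maximal absolute row sum is at most $4$, so Gershgorin's theorem gives $\lambda_{\max}(M_p^{-1})\le 4$.)

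Combining the two parts, $\gamma_{p,p}\ge \tfrac{c}{4p}$, so $p^{1+\delta}\gamma_{p,p}\ge \tfrac{c}{4}\,p^{\delta}\to\infty$ for every $\delta>0$ — in fact already $\liminf_p p\,\gamma_{p,p}>0$, which is slightly stronger. I do not foresee any real obstacle: the only step needing a touch of care is the argument that a zero-mean process with stationary, independent increments and finite variance has exactly linear variance function (hence the $\min$ covariance kernel), after which everything rests on the elementary quadratic-form bound for $M_p$ displayed above.
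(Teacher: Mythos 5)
Your proof is correct and is essentially the paper's argument in dual form: the paper passes to the increment vector via the bidiagonal difference matrix $A$ and bounds $\|A\|_{op}$, while you use the equivalent identity $v^{\top}M_p v=\sum_k s_k^2$ for the cumulative sums (i.e.\ $A^{-1}$), both yielding $\gamma_{p,p}\gtrsim c/p$, which is stronger than the stated $p^{1+\delta}\gamma_{p,p}\to\infty$. Your version also supplies a detail the paper merely asserts, namely that stationarity and independence of increments force $\mathrm{Var}(W(t))=ct$ and hence $K(s,t)=c\min(s,t)$.
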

	
	\begin{proof}
		Let us denote, with some notational abuse, $W=(W(1/p),\dots,W(1))$, $t_i=i/p$, and $v=(v_1,\dots,v_p)$.	Let us introduce the $p\times p$ matrix $A$, such that $WA$ is the $1\times p$ row vector
		$(W(1/p),W(2/p)-W(1/p),\dots , W(1)-W(1-1/p))$, that is $A=(a_{ij})$ where $a_{ii}=1$ for $i=1,\dots,p$, $a_{i-1,i}=-1$ for $i=2,\dots,p$ and $a_{ij}=0$ otherwise.
		The coordinates of $WA$ are independent random variables. Then, for all $v$, 
		$$\mathbb{E}|WAv|^2=v^\prime A^\prime \mathbb{E}(W^\prime W)Av= v^\prime A^\prime K_{T_p}Av= \|Av\|_2^2\frac{v^\prime A^\prime}{\|v'A'\|_2} K_{T_p}\frac{Av}{\|Av\|_2}.$$
		Since $A$ is invertible there exists $v$ with $\|v\|_2=1$ such that  $w:=Av$ fulfills $K_{T_p}w=\gamma_{p,p}w$
		\begin{multline*}
			\min_{v:\|v\|_2=1} \|Av\|_2^2\frac{v^\prime A^\prime}{\|v'A'\|_2} K_{T_p}\frac{Av}{\|Av\|_2}\leq  \|A\|_{op}^2\min_{v:\|v\|_2=1} \frac{v^\prime A^\prime}{\|v'A'\|_2} K_{T_p}\frac{Av}{\|Av\|_2}\leq \|A\|^2_{op}\frac{w^\prime}{\|w\|_2}K_{T_p}\frac{w}{\|w\|_2}\\=\|A\|_{op}^2 \gamma_{p,p}.
		\end{multline*}
		
		Then $\gamma_{p,p}\geq \|A\|_{op}^{-2} \min_{v:\|v\|_2=1}\mathbb{E}|WAv|^2.$ 
		Since $Var(W_{t+s}-W_t)=\sigma^2s$ for all $0\leq t,s\leq 1$, such that $s+t\leq 1$, and for some $\sigma>0$, then, if $\|v\|_2=1$, it follows that 
		$$\mathbb{E}|WAv|^2=\sigma^2\sum_{j=0}^{p-1} \mathbb{E}\big(W((j+1)/p)-W(j/p)\big)^2 v_{j+1}^2=\frac{\sigma^2}{p}$$
		Then $\gamma_{p,p}\geq \sigma^2/(p\|A\|^2_{op})$. Lastly, $\|A\|^2_{op}=1/p+ (4/p)(p-1)$, (the maximum of $\|Av\|$ conditioned to $\|v\|=1$ is when $v_i=(-1)^{i+1}/\sqrt{p}$).
	\end{proof}

	\begin{remark} In \cite{gj08} it is proved that (see eq. (68))   the fractional Brownian Motion with Hurst exponent $H$ fulfils that $\gamma_{p,p}=\mathcal{O}(1/p^{2H})$.
	\end{remark}
	

	\section{Some empirical results}\label{sec:empirical}
	
	We will consider here different examples of functional regression problems in which the goal is to predict a real random variable $Y$ from a functional explanatory variable $X=X(t)$, where $t$ ranges in some known interval $I$. Hence, our sample information is given by $n$ pairs $(X_i(\cdot),Y_i)$, $i=1,\ldots,n$, where $X_i(\cdot)=X_i(t)$ are sample trajectories of the process $X(t)$ and $Y_i$ are the corresponding response variables.
	
	The overall aim of this section is to check the performance of different finite-dimensional models, based on a few one-dimensional marginals $X(t_1),\ldots,X(t_p)$,   such as those whose asymptotic behavior has been analyzed in the previous section,  versus that of a functional $L^2$-based counterpart. 
	More precisely, we will compare the performance of a model of type
	\begin{equation}
		Y=\beta_0+\beta_1X(t_1)+\ldots+\beta_pX(t_p)+\varepsilon, \label{eq:finite}
	\end{equation}
	with that of 
	\begin{equation}
		Y=\beta_0+\int_0^1\beta(t)X(t)dt+\epsilon.\label{eq:L2intercept}
	\end{equation} 
	The word ``performance'' must be understood in terms of ``prediction capacity'', as measured by appropriate estimations of the prediction error ${\mathbb E}[(\hat Y-Y)^2]$, $\hat Y$ being the predicted value for the response obtained from the fitted model.  
	
	It is very important to note that the finite-dimensional models of type \eqref{eq:finite} are viewed here as functional models, in the sense that they are all considered as particular cases of the RKHS-model \eqref{eq.linear-loeve}. In practice, this means that we don't assume any prior knowledge about the ``impact points'' $t_i$ or the number $p$ of variables. So, in principle, the whole trajectory is available in order to pick up the impact points $t_i$ we will use. However, given the grid points $t_i$, the model \eqref{eq:finite} is handled as a problem of multivariate regression.

	\subsection{Simulation experiments}

	\noindent \textit{The models we use to generate the data}
	
	We analyse here three scenarios: one of them is more or less ``neutral'' in the comparison finite-dimensional vs. $L^2$-model. The second one is somewhat favorable to the finite-dimensional models, in the sense that one of these models is the ``true one'', though we have no advance knowledge about the impact points $t_i$ and the number of them.  Finally, the third scenario clearly favours the $L^2$-choice since the data are generated according to a model of type $\eqref{eq:L2intercept}$. We now define these scenarios in precise terms.
	
	\begin{itemize}
		\item[\textbf{Scenario 1.}] We use the function \tt rproc2fdata \rm of the R-package \tt fda.usc \rm \citep{fda.usc} to generate random trajectories according to a fractional Brownian Motion (fBM) $X(t)$ and the aim is to predict $Y=X(1)$ from the observation of the sample trajectories $X(t)$ for $t\in[0,0.95]$.   Let us recall that the fBM is a Gaussian process whose covariance function is 
		$K(s,t)=0.5(|t|^{2H}+|s|^{2H}-|t-s|^{2H})$, $H$ being the so-called ``Hurst exponent''. We have taken $H=0.8$.
		\item[\textbf{Scenario 2.}] We have generated the responses $Y_i$ according to the following two finite-dimensional models (previously considered in \cite{berrendero2019rkhs}),
		
		\textbf{Model 2a:} $Y=2X(0.2)-5X(0.4)+X(0.9)+\varepsilon.$ \\
		\textbf{Model 2b:} $Y=2.1X(0.16)-0.2X(0.47)-1.9X(0.67)+5X(0.85)+4.2X(0.91)+\varepsilon,$ 
		where in both cases the error variable $\varepsilon$ has a distribution $N(0,\sigma)$ with standard deviation $\sigma=0.2$. The process $X(t), t\in [0,1]$ follows a centered fBM with $H=0.8$. 
		\item[\textbf{Scenario 3.}] The response variable $Y$ is generated according to a $L^2$-based linear model with
		\begin{equation*}\label{3linear}
			Y=\int_0^1\log(1+4s)X(s)ds+\varepsilon,
		\end{equation*}
		where, again, the trajectories $X(t)$ are drawn from the same fBM indicated above and $\varepsilon$ has a $N(0,\sigma =0.2)$ distribution.  
	\end{itemize}
	Note that these models are only used to generate the data, so that none of the regression models we will compare in our simulations below will incorporate any prior knowledge on the true distribution of $(X(t), Y)$ whatsoever.

	\noindent \textit{The specific regression models and estimation methods we compare}
	
	Let us go back to our basic question: to what extent the finite-dimensional models (based on marginals $X(t_i)$) of type \eqref{eq:finite} are competitive against a standard, $L^2$-based regression model of type \eqref{eq:L2intercept}? 
	Since we don't assume any previous knowledge on the underlying models generating the data, we will take the ``impact points'' $t_1,\ldots,t_p$ equispaced in the observation interval $[0,1]$ (or, in the interval $[0,0.95]$ in Scenario 1 above). The coefficients $\beta_i$ in this model are estimated by the ordinary least squares method for multiple regression, using the R-function \tt lm\rm. We will check several values of $p=6, 10, 14, 18$. 
	
	As for the $L^2$-regression model \eqref{eq:L2intercept}, we will estimate the slope function $\beta$ and the intercept $\beta_0$ by the so-called functional principal components (PC) method; this essentially amounts to approximate the model \eqref{eq:L2intercept} with another finite-dimensional model obtained by projecting the functional data on a given number $q$ of principal functions (i.e., eigenfunctions of the covariance operator). We use the function \tt fregre.pc \rm  of the R-package \tt fda.usc.\rm Thus, the notation $L^2_q$ will refer to the use of this PC-based estimation method, with $q$ eigenfunctions, for the model \eqref{eq:L2intercept}.
	The considered sample sizes are $n=100, 300, 500, 700$.

	\noindent \textit{The simulation outputs}
	
	  In the left panels of the tables below we report, under the different scenarios, the mean over 100 replications of the standard error, $\sqrt{(1/k)\sum(Y_i-\hat{Y}_i)^2}$, where $k=0.2 \,n$ is the size of the random ``test sub-samples'' we use to evaluate the predictions $\hat Y_i$. . The ``training subsamples'', made of the remaining 80\% of  observations, are used to estimate the regression coefficients for $\hat Y_i$.   The right panels provide the adjusted coefficients of determination given by  $R^2_a=1-(1-R^2)(n-1)/(n-p-1)$, where $R^2=\sum_i(\hat Y_i-Y_i)^2/\sum_i(Y_i-\bar Y)^2$ is the ordinary coefficient of determination. For the $L^2_q$-functional models we replace $p$ in $R^2_a$ with the number $q$ of principal components used in the fit. \color{black}
	
	\
	
	\begin{table}[h]
		\centering\footnotesize
		\begin{tabular}{|r|rrrr|}
			\hline
			\diagbox[height=1\line]{$p$}{ $n$}	&     100 &     300 &     500 &     700 \\ \hline
			6 & 0.23838 & 0.23670 & 0.23524 & 0.23400 \\
			10 & 0.18485 & 0.17618 & 0.17338 & 0.17370 \\
			14 & 0.16162 & 0.15017 & 0.14909 & 0.14929 \\
			18 & 0.14811 & 0.13475 & 0.13390 & 0.13310 \\ \hline
			$L^2_4$ & 0.13989 & 0.13752 & 0.13736 & 0.13788 \\
			$L^2_6$ & 0.12326 & 0.11841 & 0.11767 & 0.11859 \\ \hline
		\end{tabular}\hspace{.4cm}
		\begin{tabular}{|rrrr|}
			\hline
			100 &     300 &     500 & 700     \\ \hline
			0.94359 & 0.94364 & 0.94394 & 0.94407 \\
			0.96977 & 0.97034 & 0.96993 & 0.96982 \\
			0.97830 & 0.97865 & 0.97839 & 0.97803 \\
			0.98251 & 0.98295 & 0.98277 & 0.98250 \\ \hline
			0.98098 & 0.98125 & 0.98077 & 0.98088 \\
			0.98587 & 0.98631 & 0.98615 & 0.98600 \\
			\hline
		\end{tabular}
		\caption{\footnotesize Prediction errors (left) and $R_a^2$ values (right) for the different models under \textbf{Scenario 1}.}
		\label{Sc1}
	\end{table}
	
	\newpage

	\begin{table}[h]
		\centering\footnotesize
		\begin{tabular}{|r|rrrr|}
			\hline
			\diagbox[height=1.1\line]{$p$}{ $n$}	&     100 &     300 &     500 &     700 \\ \hline
			6 & 0.41801 & 0.40521 & 0.40834 & 0.41084 \\
			10 & 0.22031 & 0.21508 & 0.21211 & 0.21196 \\
			14 & 0.28902 & 0.27278 & 0.26997 & 0.26887 \\
			18 & 0.24875 & 0.23343 & 0.23137 & 0.22897 \\ \hline
			$L^2_4$ & 0.37890 & 0.37307 & 0.37242 & 0.37872 \\
			$L^2_6$ & 0.31213 & 0.31609 & 0.31690 & 0.31626 \\ \hline
		\end{tabular} \hspace{.2cm}
		\begin{tabular}{|rrrr|}
			\hline
			100 &     300 &     500 & 700     \\ \hline
			0.90024 & 0.89827 & 0.90052 & 0.89949 \\
			0.97295 & 0.97192 & 0.97270 & 0.97264 \\
			0.95878 & 0.95612 & 0.95729 & 0.95717 \\
			0.96917 & 0.96766 & 0.96821 & 0.96837 \\ \hline
			0.91747 & 0.91433 & 0.91653 & 0.91528 \\
			0.94419 & 0.93880 & 0.94004 & 0.93967 \\ \hline
		\end{tabular}
		
		\caption{\footnotesize Prediction errors (left) and $R_a^2$ values (right) for the different models under \textbf{Scenario 2a}.}
		\label{Sc2a}
	\end{table}
	\begin{table}[h]
		\centering\footnotesize
		\begin{tabular}{|r|rrrr|}
			\hline
			\diagbox[height=1.1\line]{$p$}{ $n$}	&     100 &     300 &     500 &     700 \\ \hline
			6 & 0.63168 & 0.59355 & 0.59966 & 0.59673 \\
			10 & 0.33911 & 0.32504 & 0.31936 & 0.32095 \\
			14 & 0.28649 & 0.26921 & 0.26497 & 0.26292 \\
			18 & 0.29568 & 0.26385 & 0.25990 & 0.25734 \\ \hline
			$L^2_4$ & 0.45911 & 0.44992 & 0.43995 & 0.45183 \\
			$L^2_6$ & 0.37263 & 0.36384 & 0.36069 & 0.35768 \\ \hline
		\end{tabular}\hspace{.2cm}
		\begin{tabular}{|rrrr|}
			\hline
			100 &     300 &     500 & 700     \\ \hline
			0.99295 & 0.99317 & 0.99319 & 0.99311 \\
			0.99803 & 0.99804 & 0.99805 & 0.99804 \\
			0.99868 & 0.99870 & 0.99871 & 0.99870 \\
			0.99872 & 0.99875 & 0.99876 & 0.99875 \\ \hline
			0.99612 & 0.99606 & 0.99616 & 0.99611 \\
			0.99744 & 0.99752 & 0.99752 & 0.99754 \\ \hline
		\end{tabular}
		\caption{\footnotesize Prediction errors (left) and $R_a^2$ values (right) for the different models under \textbf{Scenario 2b}.}
		\label{Sc2b}
	\end{table}

	\begin{table}[h]
		\centering\footnotesize
		\begin{tabular}{|r|rrrr|}
			\hline
			\diagbox[height=1.1\line]{$p$}{ $n$}	&     100 &     300 &     500 &     700 \\ \hline
			6 & 0.20752 & 0.20636 & 0.20441 & 0.20368 \\
			10 & 0.20912 & 0.20548 & 0.20377 & 0.20279 \\
			14 & 0.21606 & 0.20657 & 0.20454 & 0.20347 \\
			18 & 0.22349 & 0.20865 & 0.20575 & 0.20391 \\ \hline
			$L^2_4$ & 0.20259 & 0.20291 & 0.20177 & 0.20106 \\
			$L^2_6$ & 0.20520 & 0.20406 & 0.20249 & 0.20151 \\ \hline
		\end{tabular}\hspace{.4cm}
		\begin{tabular}{|rrrr|}
			\hline
			100 &     300 &     500 & 700     \\ \hline
			0.91137 & 0.91444 & 0.91282 & 0.91404 \\
			0.91269 & 0.91592 & 0.91422 & 0.91523 \\
			0.91237 & 0.91613 & 0.91436 & 0.91548 \\
			0.91351 & 0.91621 & 0.91455 & 0.91552 \\ \hline
			0.91343 & 0.91624 & 0.91463 & 0.91559 \\
			0.91336 & 0.91617 & 0.91461 & 0.91559 \\ \hline
		\end{tabular}
		\caption{\footnotesize Prediction errors (left) and $R_a^2$ values (right) for the different models under \textbf{Scenario 3}.}
		\label{Sc3}
	\end{table}

	\newpage

	\section{More Simulations}
	To assess the performance of the estimator $\hat{\alpha}_p$,  we have considered several values of $n$ and $p$, for models 2 and 3. The mean  error, over 100 replications ($(1/100)\sum ||\hat{\alpha}_i-\alpha||_{\hat{K}}$) are reported for model 2 in table \ref{tab3} and for model 3 in table \ref{tab4}.  In both tables we assumed that $K$ is known, which is equivalent to known the Hurst parameter. 
	
	\begin{table}[ht]
		\centering
		\small
		\begin{tabular}{|r|rrrr|}
			\hline
			$p$ &       \multicolumn{4}{|c|}{$n$}        \\ \hline
			&   200  &   400  & 600    & 800  \\ \hline
			3 & 0.2610 & 0.2643 & 0.2587 & 0.2789 \\ 
			5 & 0.0075 & 0.0065 & 0.0063 & 0.0062 \\ 
			7 & 0.0643 & 0.0640 & 0.0628 & 0.0609 \\ 
			9 & 0.0642 & 0.0623 & 0.0588 & 0.0607 \\ 
			11 & 0.0511 & 0.0457 & 0.0439 & 0.0427 \\ 
			13 & 0.0266 & 0.0237 & 0.0229 & 0.0224 \\ 
			15 & 0.0057 & 0.0016 & 0.0025 & 0.0019 \\ 
			17 & 0.0222 & 0.0169 & 0.0160 & 0.0157 \\ 
			\hline
		\end{tabular}
		\caption{Mean over 100 replications of $||\hat{\alpha}_p-\alpha||_K^2$. Scenario 2a). $K$ known.}
		\label{tab3}
		
	\end{table}

	\begin{table}[ht]
		\centering\small
		\begin{tabular}{|r|rrrr|}
			\hline
			$p$ &       \multicolumn{4}{c|}{$n$}        \\ \hline
			&    200 &    400 &    600 &    800   \\ \hline 
			3 & 0.9127 & 0.9240 & 1.0097 & 0.9630   \\
			5 & 0.3983 & 0.3774 & 0.3863 & 0.3774   \\
			7 & 0.0695 & 0.0689 & 0.0680 & 0.0678   \\
			9 & 0.0723 & 0.0685 & 0.0684 & 0.0704   \\
			11 & 0.0519 & 0.0444 & 0.0448 & 0.0460   \\
			13 & 0.0210 & 0.0191 & 0.0189 & 0.0186   \\
			15 & 0.0401 & 0.0361 & 0.0350 & 0.0343   \\
			17 & 0.0447 & 0.0401 & 0.0396 & 0.0377   \\ \hline
		\end{tabular}
		\caption{Mean over 100 replications of $||\hat{\alpha}_p-\alpha||_K^2$. Scenario 2b). $K$ known.}
		\label{tab4}
	\end{table}

	\clearpage

	\subsection{The case of unknown $K$}
	
	We assume that the process is a fBm and the value of $H$ is unknown, it was estimated with  $\hat{H}=-\log(\widehat{Var}(X(1/2)))/(2\log(2))$ (which follows from $Var(X(1/2))=(1/2)^{2H}$), in Tables \ref{tab6} and \ref{tab7} we report the mean over 100 replications of $\|\hat{\alpha}_p-\alpha\|_{\hat{K}}$ where 
	$$\hat{K}(s,t)=0.5(|t|^{2\hat{H}}+|s|^{2\hat{H}}-|t-s|^{2\hat{H}})$$

	\begin{table}[ht]
		\centering
		\small
		\begin{tabular}{|r|rrrr|}
			\hline
			$p$ &       \multicolumn{4}{c|}{$n$}        \\ \hline
			&   200  &   400  & 600    & 800  \\ \hline
			3 & 0.3985 & 0.2822 & 0.2341 & 0.2971 \\ 
			5 & 0.0070 & 0.0074 & 0.0075 & 0.0068 \\ 
			7 & 0.0838 & 0.0602 & 0.0539 & 0.0719 \\ 
			9 & 0.0794 & 0.0658 & 0.0584 & 0.0594 \\ 
			11 & 0.0453 & 0.0545 & 0.0397 & 0.0396 \\ 
			13 & 0.0332 & 0.0273 & 0.0224 & 0.0214 \\ 
			15 & 0.0031 & 0.0028 & 0.0013 & 0.0012 \\ 
			17 & 0.0311 & 0.0277 & 0.0136 & 0.0160 \\ 
			\hline
		\end{tabular}
		\caption{Mean over 100 replications of $||\hat{\alpha}_p-\alpha||_K^2$. Scenario 2a). $K$ unknown.}
		\label{tab6}
		
	\end{table}

	\begin{table}[ht]
		\centering\small
		\begin{tabular}{|r|rrrr|}
			\hline
			$p$ &       \multicolumn{4}{|c|}{$n$}        \\ \hline
			&    200 &    400 &    600 &    800   \\ \hline 
			3 & 1.0334 & 0.8965 & 1.0999 & 0.8796 \\ 
			5 & 0.4482 & 0.3608 & 0.4855 & 0.3933 \\ 
			7 & 0.0761 & 0.0718 & 0.0632 & 0.0734 \\ 
			9 & 0.0785 & 0.0427 & 0.0662 & 0.0538 \\ 
			11 & 0.0582 & 0.0667 & 0.0595 & 0.0492 \\ 
			13 & 0.0310 & 0.0223 & 0.0226 & 0.0182 \\ 
			15 & 0.0483 & 0.0488 & 0.0366 & 0.0367 \\ 
			17 & 0.0614 & 0.0396 & 0.0404 & 0.0486 \\ \hline
		\end{tabular}
		\caption{Mean over 100 replications of $||\hat{\alpha}_p-\alpha||_K^2$. Scenario 2b). $K$ unknown.}
		\label{tab7}
	\end{table}

	\subsection{Real data examples}
	
	Real data sets provide another natural playing field for a fair comparison on the prediction capacity of different regression models. 
	In all considered cases the sample is randomly divide in two parts: 80\% of the observations is used for training (i.e. for parameter estimation) and the remaining 20\% is used in order to check the accuracy of the predictions. This random splitting is repeated 100 times. As in the simulated examples, the tables below report the average prediction errors and the (average) adjusted coefficients of determination. 
	
	\
	
	\noindent \textit{The data sets under study}
	\begin{itemize}
		\item[(a)] \textit{The Tecator data set}. This data set has been used and described many times in papers and textbooks; see, e.g., \cite{ferraty2006nonparametric}. It is available in the R-package \tt fda.usc.\rm, see \cite{fda.usc}.  After removing some duplicated data, we have 193 functions obtained from a spectrometric study performed on meat samples in which the near infrared absorbance spectrum is recorded. The response variable is the fat content of the meat pieces. The functions are observed at a grid of 100 points. 
		
		\begin{table}[ht]
			\centering
			\footnotesize
			\begin{tabular}{|l|rrrrrrrr|r|}
				\hline
				$p$ &	4 &      5 &      6 &      7 &      8 &      9 &     10 &     11 &  $L^2_3$\\ \hline
				Pred. error	& 3.8743 & 3.2116 & 2.9689 & 3.3557 & 2.8581 & 3.3886 & 3.4155 & 3.5514 &  3.2567\\
				$R_a^2$ &	0.9106 & 0.9427 & 0.9513 & 0.9436 & 0.9548 & 0.9400 & 0.9446 & 0.9472 & 0.9415\\\hline
				Pred. error &	8.8023 & 3.3377 & 3.3791 & 3.3950 & 3.0646 & 3.3215 & 2.9751 & 2.9421 & 8.4414 \\
				$R_a^2$ &	0.5287 & 0.9396 & 0.9401 & 0.9383 & 0.9527 & 0.9450 & 0.9532 & 0.9588 & 0.5938\\
				\hline
			\end{tabular}
			\caption{\footnotesize Standard prediction errors (mean over 100 replications) and adjusted $R_a^2$, for different values of $p$ for the Tecator dataset. In the first two rows the second derivatives are used. The last two rows correspond to the original data.}
			\label{tecator}
		\end{table}

		An important aspect of this dataset is the fact that the derivatives of the sample functions seem to be more informative than the original data themselves. Thus, we have taken into account this feature, using the original data, as well as the second derivatives to predict the response variable (obtained by preliminary smoothing of the data). The outputs are shown in Table \ref{tecator}, where the first two rows correspond  to the prediction errors and determination coefficients for the second derivatives and the last two rows show the results obtained with the original data. The headings 4,5,...,11 refer to the number $p$ of variables used in the finite-dimensional models and the notation $L^2_3$ refers to the functional $L^2$-model fitted by projecting on 3 principal components, as explained above. The prediction errors are estimated by splitting the sample into training and tests elements, as mentioned in the description of the simulation experiments (all the considered values of $p$ are checked in every run).

		\item[(b)] \textit{The sugar data set}.  This data set has been  previously  considered in functional data analysis by several authors; see e.g. \cite{aneiros2014variable} for additional details. The functions $X(t)$ are fluorescence spectra obtained from sugar samples and the response $Y$ is the ash content, in percentage of the sugar samples.
		The comparison results of finite-dimensional models versus the $L^2$-functional counterpart are shown in Table \ref{sugar}.  Again the outputs correspond to the averages over 100 replications obtained by randomly selecting 100 training and tests samples from the original data.

		\begin{table}[h]
			\centering
			\footnotesize
			\begin{tabular}{|l|rrrrrrrr|c|}
				\hline
				$p$ &		4 &      5 &      6 &      7 &      8 &      9 &     10 &     11 & $L^2_3$ \\ \hline
				Pred. error	&		1.9793 & 2.0472 & 1.9918 & 1.9975 & 1.9300 & 1.9784 & 1.9929 & 1.9938 & 2.2203 \\ 
				$R_a^2$ &		0.7523 & 0.7507 & 0.7607 & 0.7618 & 0.7841 & 0.7733 & 0.7754 & 0.7798 & 0.6579 \\ \hline
			\end{tabular}
			\caption{\footnotesize Standard prediction errors and adjusted $R_a^2$, with different values of $p$ for the sugar dataset.}
			\label{sugar}
		\end{table}

		\item[(c)] \textit{Population data} The data provide, for 237 countries and geographical areas, the percentage of population under 14 years for the period 1968-2018 (one datum per year). In our experiment, the functional data (in fact, they are longitudinal data) are given for vectors $(X(1960), X(1961),\ldots,X(2010))$ and the aim is to predict the value eight years ahead. Thus, the response variable is $Y=X(2018)$. Several theoretical assumptions (for example, independence), commonly used in the linear model, are violated here but, still, our comparisons make sense at an exploratory data level.  The outputs can be found in Table \ref{popu} below. As in the previous examples, they correspond to 100 runs based on random partitions of the data set into 80\% training data and 20\% test data.  Again $p$ denotes the number of years (equispaced in the interval 1960-2010) used as explanatory variables in the finite-dimensional models.  Thus for $p=10$ we consider the years 1960, 1965,...,2010; for $p=8$ we take 1960, 1967, 1974,...,2009. 
		
		\begin{table}[ht]
			\centering
			\footnotesize
			\begin{tabular}{|l|rrrrrrr|r|}
				\hline
				$p$ &		5 &      6 &      7 &      8 &      9 &           11&     13 & $L^2_3$ \\ \hline
				Pred. error	&		1.7298 & 2.7577 & 1.6625 & 1.4712 & 1.5952 &  2.0241 & 1.6138 &   2.1520 \\
				$R_a^2$ &		0.9719 & 0.9294 & 0.9754 & 0.9810 & 0.9778 &  0.9629 & 0.9815 &   0.9559\\\hline
			\end{tabular}
			\caption{\footnotesize Standard error and adjusted $R_a^2$, for different values of $p$ for the population-under-14 dataset. }
			\label{popu}
		\end{table}
	\end{itemize}
	\section{Conclusions}
	We explore a mathematical framework, different from the classical $L^2$-approach \eqref{eq:L2intercept}, for the problem of linear regression with functional explanatory variable $X$ and scalar response $Y$. 
	\begin{itemize}
		\item[(a)] This mathematical formulation includes, as particular cases, the finite-dimensional models \eqref{eq:finite} obtained by considering as explanatory variables a finite set $X(t_1),\ldots,X(t_p)$ of marginals. This would allow for example, to compare such models for variable selection purposes \citep{berrendero2019rkhs} or considering, within a unified framework, the study of asymptotic behaviour of models as the number $p$ of covariates grows to infinity (see e.g. \citep{sur2019modern} for a recent analysis in the logistic regression model). Note also that in the functional case the asymptotic analysis as $p\to\infty$ appears more naturally than in the case of general regression studies,  since the new incoming co-variables are homogeneous in nature as they come from the  unique, predefined reservoir of the one-dimensional marginals of the process $X(t)$.
		
		\item[(b)] From a practical viewpoint, the fact of encompassing all the finite-dimensional models under a unique super-model \eqref{eq.linear-loeve}-\eqref{eq:parzen_model} is also relevant in view of the empirical results of Section \ref{sec:empirical}: indeed, the outputs of the simulations and the real data examples there show that, somewhat surprisingly, there is often little gain (if any) in considering the $L^2$-functional model \eqref{eq:L2intercept} instead of the simpler finite-dimensional alternatives \eqref{eq:finite}.

		\item[(c)] Of course, we don't claim that the $L^2$-based regression model \eqref{eq:L2intercept} should be abandoned in favour of the finite-dimensional alternatives of type \eqref{eq:finite}, since this model is now well-understood and has proven useful in many examples.  We are just suggesting that there are perhaps some reasons to consider the problem of linear functional regression under a broader perspective. In addition, note that the $L^2$ model appears as a particular case of the general formulation \eqref{eq.linear-loeve}-\eqref{eq:parzen_model}. 
		
		\item[(d)] In any case,  even if we are willing to incorporate the finite-dimensional models \eqref{eq:finite}, according to our suggested approach, the functional character of the regression problem is not lost at all as the proposed global general formulation is unequivocally functional.  In practice, this means that, according to our assumptions, our explanatory variables are functions and we cannot get rid of this fact in the formulation of our model. 
		
	\end{itemize}
	\appendix
	\section{Appendix: Proof of Theorem \ref{rkhsnorm}}

	  The proof depends on the following three lemmas, the first one is a direct application of Lemma 3.1 in \cite{bosq1991}, (it is also called  Weyl's inequality in the literature). 
	In what follows, we denote ${\mathcal X}_p$ the $n\times p$ data matrix whose $(i,j)$-entry is $X_i(t_{j,p})$, $i=1,\ldots,n$, $j=1,\ldots,p$. Denote also by $K_{T_p}$, the covariance matrix of $(X(t_{1,p}),\ldots,X(t_{p,p}))$. \color{black} 
	
	\begin{lemma} \label{lemaux2} Let  $\gamma_{1,p}\geq \gamma_{2,p}\geq \dots \geq \gamma_{p,p}$ be the eigenvalues of $K_{T_p}$ and $\hat{\gamma}_{1,p}\geq \hat{\gamma}_{2,p}\geq \dots \geq \hat{\gamma}_{p,p}$ the eigenvalues of $(1/n)(\mathcal{X}_p^\prime \mathcal{X}_p)$. Then, for $j=1,\dots,p$,  $|\gamma_{j,p}-\hat{\gamma}_{j,p}|\leq \|(1/n)(\mathcal{X}_p^\prime \mathcal{X}_p)-K_{T_p}\|_{op}.$
	\end{lemma}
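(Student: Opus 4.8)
The plan is to invoke the Courant–Fischer min-max principle, which applies because both $\tfrac1n\mathcal{X}_p^\prime\mathcal{X}_p$ and $K_{T_p}$ are symmetric (in fact positive semidefinite) $p\times p$ matrices. Write $A=\tfrac1n\mathcal{X}_p^\prime\mathcal{X}_p$ and $B=K_{T_p}$, so that in the notation of ordered eigenvalues $\hat{\gamma}_{j,p}=\lambda_j(A)$ and $\gamma_{j,p}=\lambda_j(B)$. The first step is to record the variational formula $\lambda_j(A)=\max_{\dim V=j}\ \min_{x\in V,\ \|x\|_2=1} x^\prime A x$, valid for every $j=1,\dots,p$, and likewise for $B$.

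Next I would bound the two quadratic forms pointwise. For every unit vector $x\in\mathbb{R}^p$, the Cauchy–Schwarz inequality gives $|x^\prime A x - x^\prime B x| = |x^\prime (A-B) x| \le \|(A-B)x\|_2\,\|x\|_2 \le \|A-B\|_{op}$, and hence $x^\prime A x \le x^\prime B x + \|A-B\|_{op}$ for all such $x$. Substituting this inequality into the min-max characterization of $\lambda_j(A)$ and using monotonicity of the inner $\min$ and outer $\max$ over subspaces yields $\lambda_j(A) \le \lambda_j(B) + \|A-B\|_{op}$. Since the roles of $A$ and $B$ are symmetric, the same argument gives $\lambda_j(B) \le \lambda_j(A) + \|A-B\|_{op}$, and combining the two bounds produces $|\gamma_{j,p}-\hat{\gamma}_{j,p}| = |\lambda_j(B)-\lambda_j(A)| \le \|A-B\|_{op} = \|\tfrac1n\mathcal{X}_p^\prime\mathcal{X}_p - K_{T_p}\|_{op}$, which is exactly the claimed inequality. (Alternatively, the statement is just Weyl's perturbation inequality and can be quoted directly, e.g. from Lemma 3.1 in \cite{bosq1991}.)

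There is no substantial obstacle here; the only points needing a line of care are the justification that the min-max formula applies, which requires the two matrices to be symmetric — true by construction — and the identification of $\|M\|_{op}$ with $\sup_{\|x\|_2=1}\|Mx\|_2$ so that the pointwise bound on the quadratic form goes through via Cauchy–Schwarz. Everything else is routine manipulation of suprema and infima, so in the write-up I would keep the argument to a few lines (or simply cite Weyl's inequality).
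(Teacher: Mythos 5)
Your proposal is correct: the paper gives no proof at all for this lemma, simply citing it as Weyl's perturbation inequality (Lemma 3.1 in \cite{bosq1991}), and your Courant--Fischer argument is the standard, valid derivation of exactly that inequality, with the symmetry of both matrices and the Cauchy--Schwarz bound on the quadratic form correctly noted as the only points requiring care. Either citing the result, as the paper does and as you suggest in your parenthetical, or including your few-line proof would be acceptable.
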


	\begin{lemma}\label{lemaux3} Let   $K$ be a continuous covariance function and  let $T_p$ be a \color{black} set of grid points as in Lemma \ref{parzen}.
		Then   $\lim_{p\to\infty}\frac{1}{p}\|K_{T_p}\|_{op} =\lambda_1$, \color{black} where $\lambda_1$ is the largest  eigenvalue of the covariance operator $\mathcal{K}$ associated with $K$.
	\end{lemma}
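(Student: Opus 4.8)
The plan is to prove the two inequalities $\limsup_p\frac1p\|K_{T_p}\|_{op}\le\lambda_1$ and $\liminf_p\frac1p\|K_{T_p}\|_{op}\ge\lambda_1$ separately, in each case by comparing the discrete quadratic form $v\mapsto v^\top K_{T_p}v$ on $\mathbb R^p$ with the continuous one $f\mapsto\langle\mathcal K f,f\rangle_2$ on $L^2[0,1]$, and exploiting that $\mathcal K$ is compact, self-adjoint and positive semidefinite, so that $\|K_{T_p}\|_{op}=\lambda_{\max}(K_{T_p})$ and $\langle\mathcal K f,f\rangle_2\le\lambda_1\|f\|_2^2$ for all $f$. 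The bridge between the two is a step-function interpolation: to $v=(v_1,\dots,v_p)$ I associate $g_v=\sum_j v_j\mathbf 1_{I_j}\in L^2[0,1]$, where $\{I_j\}$ is the partition of $[0,1]$ into the $p$ equal subintervals containing the grid points $t_{j,p}$ (I take the grids equispaced, $t_{j,p}=j/p$, which is the setting used in the applications). Then $\|g_v\|_2^2=\frac1p\|v\|_2^2$, while the uniform continuity of $K$ on the compact $[0,1]^2$ — with modulus $\omega_K(\cdot)$ — gives
\[
\Bigl|\langle\mathcal K g_v,g_v\rangle_2-\frac1{p^2}\,v^\top K_{T_p}v\Bigr|\ \le\ \frac1{p^2}\,\omega_K(1/p)\,\|v\|_1^2 ,
\]
because $\int_{I_i}\int_{I_j}K(s,t)\,ds\,dt$ differs from $\frac1{p^2}K(t_{i,p},t_{j,p})$ by at most $\frac1{p^2}\omega_K(1/p)$.

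For the upper bound I would plug in the unit-norm top eigenvector $v^{(p)}$ of $K_{T_p}$, so that $(v^{(p)})^\top K_{T_p}v^{(p)}=\|K_{T_p}\|_{op}$ and $\|v^{(p)}\|_1\le\sqrt p$; then $\langle\mathcal K g_{v^{(p)}},g_{v^{(p)}}\rangle_2\le\lambda_1\|g_{v^{(p)}}\|_2^2=\lambda_1/p$, and feeding this into the displayed estimate yields $\frac1p\|K_{T_p}\|_{op}\le\lambda_1+\omega_K(1/p)\to\lambda_1$. For the lower bound I would instead test with $v=(e_1(t_{1,p}),\dots,e_1(t_{p,p}))$, $e_1$ being the top eigenfunction of $\mathcal K$; since $\lambda_1>0$, $e_1=\lambda_1^{-1}\mathcal K e_1$ has a continuous (hence bounded) version, so $g_v\to e_1$ in $L^2$, whence $\frac1p\|v\|_2^2=\|g_v\|_2^2\to\|e_1\|_2^2=1$ and, the error in the displayed estimate now being at most $\omega_K(1/p)\|e_1\|_\infty^2\to0$ and $\mathcal K$ being bounded, $\frac1{p^2}v^\top K_{T_p}v\to\langle\mathcal K e_1,e_1\rangle_2=\lambda_1$. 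Hence $\frac1p\|K_{T_p}\|_{op}\ge\frac{\frac1{p^2}v^\top K_{T_p}v}{\frac1p\|v\|_2^2}\to\lambda_1$, which establishes $\liminf_p\frac1p\|K_{T_p}\|_{op}\ge\lambda_1$ and completes the proof.

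The step I expect to be the main obstacle is making the discrete-to-continuous comparison uniform over the a priori unknown extremal vector $v^{(p)}$ of $K_{T_p}$; this is exactly where the bound $\|v^{(p)}\|_1\le\sqrt p\,\|v^{(p)}\|_2$ and the uniform continuity of $K$ enter. A secondary, more conceptual point I would flag is that the conclusion as stated needs the grids to equidistribute on $[0,1]$ — equispaced grids suffice — because for a general nested dense sequence $T_p\subset T_{p+1}$ the rescaled matrices $\frac1p K_{T_p}$ track the operator attached to the limiting empirical measure of $T_p$, which need not equal $\mathcal K$; so the hypothesis of the lemma should be read as also requiring $\max_j|t_{j+1,p}-t_{j,p}|\to0$ together with asymptotic uniformity of the grid, both of which hold in every application of this lemma.
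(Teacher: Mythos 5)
Your proof is correct, and it takes a genuinely different route from the paper's. The paper works in the opposite direction: it takes the top eigenvector of $K_{T_p}$, builds a \emph{polygonal} interpolant, proves equicontinuity of these interpolants from the eigenvector equation and the uniform continuity of $K$, invokes Arzel\`a--Ascoli to extract a uniform limit, shows that limit is an eigenfunction of $\mathcal K$ whose eigenvalue is $\lim_p \gamma_{1,p}/p$, and only then pins that eigenvalue down to $\lambda_1$ by a separate contradiction argument plus a subsequence-of-subsequences step. Your argument replaces all of this compactness machinery with a direct two-sided Rayleigh-quotient comparison through the step-function lift $v\mapsto g_v$: the single quantitative estimate $\bigl|\langle\mathcal K g_v,g_v\rangle_2-p^{-2}v^\top K_{T_p}v\bigr|\le p^{-2}\omega_K(1/p)\|v\|_1^2$, tested once with the discrete maximizer (using $\|v\|_1\le\sqrt p\,\|v\|_2$) and once with the sampled continuous maximizer $e_1$, yields both inequalities with an explicit error $\omega_K(1/p)$. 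This buys an elementary, subsequence-free proof with a rate, and it makes visible exactly where the geometry of the grid enters. Your closing caveat is also well taken and applies equally to the paper's own proof: a nested sequence $T_p\subset T_{p+1}$ with $|T_p|=p$ cannot literally be equispaced, and both arguments implicitly treat $p^{-1}\sum_j(\cdot)$ as a Riemann sum for $\int_0^1(\cdot)\,dt$, which requires the grids to be asymptotically uniformly distributed (as they are in every application in the paper); the lemma's hypothesis should indeed be read as including that requirement.
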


\begin{proof}
	It is enough to prove that $\|(1/p)K_{T_p}\|_{op}\to \lambda_1$. Let the $p$-dimensional vector $f_p=(f(t_{1,p}),\dots,f(t_{p,p}))$ be an eigenvector of $(1/p)K_{T_p}$ associated to $\gamma_{1,p}$ the largest eigenvalue of $K_{T_p}$, such that $\|f_p\|_{\max}\leq \gamma_{1,p}$ for all $p$. Let us define a polygonal function $g_p:[0,1]\rightarrow \mathbb{R}$ such that $g_p(t_{i,p})=f(t_{i,p})$, observe that $\|g_p\|_{\infty}=\|f_p\|_{\max}$.  	 Let us prove that $\{g_p\}_p$ is an equicontinuous sequence. Since $K(s,t)$ is continuous, it is also uniformly continuous on $[0,1]^2$. Then, for all $\epsilon>0$, there exists $\delta=\delta(\epsilon)>0$ such that $|K(x,y)-K(x',y')|<\epsilon$ if $\|(x,y)-(x',y')\|_{\max} <\delta$.  Let us denote $\|T_p\|=\max_{i=1,\dots,p-1}|t_{i+1,p}-t_{i,p}|$.  Let $\epsilon>0$ and $p$ large enough such that $\|T_p\|<\delta$. From $\|f_p\|_{\max}\leq \gamma_{1,p}$, it follows that, for all $i,k$ such that $1\leq i \leq p$, $1\leq i+k\leq p$ and $|t_{i,p}-t_{i+k,p}|<\delta$, we have 
	$$
	\gamma_{1,p}|f(t_{i,p})-f(t_{i+k,p})|= \frac{1}{p}\Big|\sum_{j=1}^p \Big[ K(t_{i,p},t_{j,p})-K(t_{i+k,p},t_{j,p})\Big]f(t_{j,p})\Big|\leq \epsilon \gamma_{1,p}.
	$$	
	Then, for $p$ large enough, 	for all $i,k$ such that $1\leq i \leq p$, $1\leq i+k\leq p$ and $|t_{i,p}-t_{i+k,p}|<\delta$, $|f(t_{i,p})-f(t_{i+k,p})|\leq \epsilon$.
	Hence, $\{g_p\}_p$ is equicontinuous. 
	
	Since $\{g_p\}_p$ is bounded, by Arzela-Ascoli there exists $p_k\to\infty $ and $g$ such that $\|g_{p_k}-g\|_\infty\to 0$. For ease of writing we will denote $g_{p_k}=g_p$,
	Let us fix $t_{i,p}$. Then, for all $\epsilon>0$ and for $p$ (which depends on $t_{i,p}$) large enough,  
	\begin{equation}\label{eqauxlem}
		\Big|\gamma_{1,p} g_p(t_{i,p})-\int_0^1 K(t_{i,p},t)g_p(t)dt\Big|= \Big|\frac{1}{p}\sum_{j=1}^p K(t_{i,p},t_{j,p})g_p(t_{j,p})-\int_0^1 K(t_{i,p},t)g_p(t)dt\Big|<\epsilon.
	\end{equation}
	Since $g_p\to g$ uniformly, there exists $p_0>0$ such that Equation \eqref{eqauxlem} holds for all $p>p_0$. By continuity of $K$ and $g_p$ it can be seen that there exists $p_1>p_0$ such that for all $p>p_1$,
	$$\max_{s\in [0,1]}\Big|\gamma_{1,p} g_p(s)-\int_0^1 K(s,t)g_p(t)dt\Big| <\epsilon.$$
	
	Again using that $g_p\to g$ uniformly, $\|\int_0^1 K(s,t)g_p(t)dt-\int_0^1 K(s,t)g(t)dt\|_\infty\to 0$, then $\gamma_{1,p}g_p\to \lambda g$ for some $\lambda$. This proves that  $g$ is an eigenfunction of $\mathcal{K}$ with eigenvalue $\lambda$.
	
	Let us prove that $\lambda= \lambda_1$. Assume first that $\lambda<\lambda_1$ and let $f$ be an eigenfunction associated to $\lambda_1$. Let us define $f_p=(f(t_{1,p}),\dots,f(t_{p,p}))$. Since $f\in L^2[0,1]$ and $f$ is a continuous function, we have that $w_p:=\sum_{i=1}^p f(t_{i,p})^2\to \infty$ as $p\to \infty$, from where it follows that  $\xi_p:=f/\|f_p\|_2^2$    is a Cauchy sequence w.r.t to the uniform norm (observe that $\|\xi_p-\xi_{p+k}\|_\infty\leq \|f\|_\infty((1/w_p)+(1/w_{p+k}))$). Then we can apply \eqref{eqauxlem} for $p>p_2$ for some $p_2$ and we get
	$\|\lambda_1\xi_p-(1/p)K_{T_p}\xi_p\|_{\infty}<\epsilon$. Let us take $\epsilon<(\lambda_1-\lambda)/4$ and $p$ large enough such that $|\gamma_{1,p}-\lambda|<\epsilon/4$. Since $\|\xi_p\|=1$, 
	$$\lambda+\epsilon/4\geq \gamma_{1,p}\geq \|(1/p)K_{T_p}\xi_p\|_{op}\geq \lambda_1-\epsilon,$$
	which contradicts that $\epsilon<(\lambda_1-\lambda)/4$ and then $\lambda\geq \lambda_1$.
	
	To prove the other inequality let $(f(t_{1,p}),\dots,f(t_{p,p}))$ be an eigenvector of $K_{T_p}$ and $g_p$ a polygonal function such that $g_p(t_{i,p})=f(t_{i,p})$, we have proved that there exists $g$ an eigenfunction (associated to $\lambda$) of $\mathcal{K}$, such that $g_{p_k}\to g$ uniformly. Denote $g_p=g_{p_k}$ and define the sequence of functions 
	$$h_p(s)=\frac{1}{\sqrt{\|g\|_2}}g_p(s)\quad s\in [0,1].$$
	Then, $h_p\to h$ uniformly, where $h$ is an eigenfunction of $\mathcal{K}$ associated to $\lambda$, and $\|h\|_2^2=1$. Hence,   $\lambda=\|\mathcal{K}(h)\|_2\leq \lambda_1$. 
	
	Finally, $\gamma_{1,p}\to \lambda_1$ since  we have proved that every subsequence of $\gamma_{1,p}$ has a subsequence which converges to $\lambda_1$.\\
\end{proof}

	\begin{lemma}   \label{asconvlem}Under the hypotheses of Th. \ref{rkhsnorm}. We have, for all $\epsilon>0$,
		\begin{equation} \label{asconv}
			\Big\|\frac{1}{n}\mathcal{X}_p^\prime \mathcal{X}_p-K_{T_p}\Big\|_{op}\leq \epsilon\gamma_{p,p}, \mbox{ eventually, with probability one.}
		\end{equation}
	\end{lemma}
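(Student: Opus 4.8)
The plan is to reduce the operator‑norm control to an entrywise bound and then apply Bernstein's inequality together with the first Borel--Cantelli lemma. Write $A_n:=\frac1n\mathcal{X}_p^\prime\mathcal{X}_p-K_{T_p}$, a symmetric $p\times p$ matrix whose $(j,k)$ entry is the centered empirical covariance
\[
(A_n)_{jk}=\frac1n\sum_{i=1}^n X_i(t_{j,p})X_i(t_{k,p})-K(t_{j,p},t_{k,p}).
\]
Since $\|A_n\|_{op}\le\|A_n\|_{HS}\le p\max_{1\le j,k\le p}|(A_n)_{jk}|$, it suffices to prove that for every $\epsilon>0$ one has $\max_{j,k}|(A_n)_{jk}|\le \epsilon\gamma_{p,p}/p$ eventually, a.s.

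First I would record a uniform sub-Gaussianity fact: by hypothesis (ii) (and centering of $X$), $\sup_t|X(t)|$ is sub-Gaussian, hence every marginal $X(t_{j,p})$ is sub-Gaussian with a proxy constant $\tau^2$ that does \emph{not} depend on $j$ or $p$. Consequently each product $X_i(t_{j,p})X_i(t_{k,p})$ is sub-exponential with parameters bounded by an absolute multiple of $\tau^2$, uniformly in $i,j,k,p$ --- this is the crucial point that keeps the growing dimension out of the tail estimate, except through the union bound. Bernstein's inequality applied to the i.i.d.\ centered average defining $(A_n)_{jk}$ then gives constants $c_1,c_2>0$ (depending only on $\tau^2$) with $\mathbb{P}(|(A_n)_{jk}|>s)\le 2\exp(-c_1 n\min\{s^2,c_2 s\})$ for all $s>0$, and a union bound over the $p^2$ entries yields
\[
\mathbb{P}\big(\|A_n\|_{op}>\epsilon\gamma_{p,p}\big)\le \mathbb{P}\Big(\max_{j,k}|(A_n)_{jk}|>\tfrac{\epsilon\gamma_{p,p}}{p}\Big)\le 2p^2\exp\!\Big(-c_1 n\min\big\{\tfrac{\epsilon^2\gamma_{p,p}^2}{p^2},\tfrac{c_2\epsilon\gamma_{p,p}}{p}\big\}\Big).
\]

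To conclude I would use that $\gamma_{p,p}\to 0$ (for instance via Lemma \ref{lemaux3} or Proposition \ref{levy}), so $\gamma_{p,p}/p\to 0$ and the minimum above equals $\epsilon^2\gamma_{p,p}^2/p^2$ for all large $n$; moreover $\gamma_{p,p}$ is bounded above (by $\max_t K(t,t)$), so hypothesis (iii) forces $p=o(\sqrt n)$ and hence $2\log p\le\log n$ eventually. Thus the right-hand side is at most $\exp\big(\log 2+\log n-c_1\epsilon^2\,n\gamma_{p,p}^2/p^2\big)$, and since hypothesis (iii) gives $n\gamma_{p,p}^2/p^2\ge C_n\log^3 n$ with $C_n\to\infty$, for $n$ large this is $\le n^{-2}$. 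As $\sum_n n^{-2}<\infty$, Borel--Cantelli yields $\|A_n\|_{op}\le\epsilon\gamma_{p,p}$ eventually, a.s., which is \eqref{asconv}. The main obstacle --- and exactly where hypotheses (ii)--(iii) do their work --- is the uniform-in-$(j,k,p)$ control of the sub-exponential parameters of the products $X_i(t_{j,p})X_i(t_{k,p})$; once that is in hand, the generous $\log^3 n$ margin in (iii) comfortably dominates both the $\log p$ loss from the union bound and the $\log n$ needed for summability.
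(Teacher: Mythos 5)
Your proof is correct, but it follows a genuinely different route from the paper's. The paper truncates each observation on the event $\mathcal{A}_i=\{\max_j|X_i(t_{j,p})|<\log^2 n\}$, applies a matrix Bernstein inequality (Corollary 5.2 in Mackey et al.) to the bounded random matrices $Y_k=Z_kZ_k'-K_{T_p}\mathbb{I}_{\mathcal{A}_k}$, and controls the truncation error separately via the sub-Gaussian tail of $\sup_t X(t)$; this requires Lemma \ref{lemaux3} (i.e.\ $\gamma_{1,p}/p\to\lambda_1$) to bound the variance proxy $\eta^2\leq np\log^2 n\,\gamma_{1,p}$, and the resulting exponent is of order $n\gamma_{p,p}^2/(p^2\gamma_{1,p}\log^2 n/p)$. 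You instead reduce the operator norm to the maximal entry via $\|A\|_{op}\leq\|A\|_{HS}\leq p\max_{j,k}|A_{jk}|$, apply scalar Bernstein to each sub-exponential product $X_i(t_{j,p})X_i(t_{k,p})-K(t_{j,p},t_{k,p})$, and take a union bound over $p^2$ entries. This is more elementary: it needs no truncation, no matrix concentration, and no appeal to Lemma \ref{lemaux3} (you only use that $\gamma_{p,p}$ is bounded, which follows from the trace, and that $\gamma_{p,p}/p\to 0$). The price is the extra factor $p$ in passing from the entrywise maximum to the operator norm, but since your exponent $c_1\epsilon^2 n\gamma_{p,p}^2/p^2$ is exactly the quantity that hypothesis (iii) sends to infinity faster than $\log^3 n$, the $2\log p+\log n$ loss from the union bound is harmless and the conclusion via Borel--Cantelli is sound. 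One small caveat, which applies equally to the paper's own argument: hypothesis (ii) controls only the upper tail of $X(t)$ through $\sup_t X(t)$, whereas both proofs implicitly need two-sided (i.e.\ $\sup_t|X(t)|$) control to deduce that the marginals $X(t_{j,p})$ are sub-Gaussian; this is a matter of how (ii) is read rather than a defect specific to your argument.
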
\color{black} 
	\begin{proof} Let us define $\mathcal{F}_n= \{\omega:\|\frac{1}{n}\mathcal{X}_p^\prime \mathcal{X}_p-K_{T_p}\|_{op}> \epsilon\gamma_{p,p}\}.$
		To prove (\ref{asconv}), by Borel-Cantelli lemma, it is enough to prove  that, $\sum_n \mathbb{P}(\mathcal{F}_n)<\infty$.
		Let us denote $\mathcal{A}_{i}=\mathcal{A}_{i,n}=\{\omega:\max_{j=1,\dots,p}|X_i(t_{j,p})|<\log^2n\}$, then $\mathbb{P}(\mathcal{F}_n)\leq \mathbb{P}(\mathcal{F}_n\cap \bigcap_{i=1}^n \mathcal{A}_i)+n\mathbb{P}(\mathcal{A}^c_1):=I_1+I_2.$
		
		To prove that $\sum_n I_1<\infty$, we will use Corollary 5.2 in \cite{mackey2014}. Let us define $Y_k$ the $p\times p$ random matrix whose entry $i,j$ is $(X_k(t_{i,p})X_k(t_{j,p})-(K_{T_p})_{i,j})\mathbb{I}_{\mathcal{A}_k}.$
		Let us denote  $Z_k=(X_k(t_{1,p}),\dots,X_k(t_{p,p}))'\mathbb{I}_{\mathcal{A}_k}$, then $Y_k=Z_k Z'_k-K_{T_p}\mathbb{I}_{\mathcal{A}_k}$,  so  $\|Y_k\|_{op}\leq \|Z_k Z'_k \|_{op}+\|K_{T_p}\mathbb{I}_{\mathcal{A}_k}\|_{op}\leq \|Z_k\|_2^2+\|K_{T_p}\mathbb{I}_{\mathcal{A}_k}\|_{op}$. 
		We have that $\|Z_k\|_2^2\leq p\log^2 n$ and
		$$\|K_{T_p}\mathbb{I}_{\mathcal{A}_k}\|_{op} =\max_{z\in S^{p-1}} \mathbb{E}[(z^\prime Z_k)(Z_k^\prime z)]=\max_{z\in S^{p-1}} \mathbb{E}[(z^\prime Z_k)^2]\leq \|z\|_2^2\|Z_k \|_2^2\leq p\log^2 n.$$ 
		To bound $\eta^2:=\|\sum_k \mathbb{E}(Y_k^2)\|_{op}\leq n \|\mathbb{E}(Y^2_1)\|_{op}$, observe that,  
		$\mathbb{E}[Y_1^2]\leq \mathbb{E}[(Z_1 Z'_1)^2]=\mathbb{E}[\|Z_1\|_2^2Z_1 Z'_1]\leq p\log^2 n\mathbb{E}[Z_1 Z'_1]\leq p\log^2 nK_{T_p}$. Then, $\eta^2\leq np\log^2 n\gamma_{1,p}$.   By Corollary 5.2 in \cite{mackey2014}, 	
		 	\begin{equation}
			\mathbb{P}\Big(\mathcal{F}_n \cap \bigcap_{i=1}^n \mathcal{A}_i\Big) \leq p \exp\Bigg[-\frac{(n\epsilon\gamma_{p,p})^2}{3np\gamma_{1,p}\log^2 n+4pn\epsilon\gamma_{p,p}\log^2 n }\Bigg]:=\exp(-a_n).
		\end{equation}
		From Lemma \ref{lemaux3}, $\gamma_{1,p}/p\to \lambda_1$, then $\sum_n I_1<\infty$ follows from the assumption $n(\gamma_{p,p})^2/(p^2\log(n)^3)\to \infty$ since this implies $a_n/\log n\to\infty$.	
		Finally, $\sum_n	n\mathbb{P}(\mathcal{A}^c_1)<\infty$ follows from the assumption $\mathbb{P}(\sup_{t\in [0,1]} X(t)>M)\leq \exp(-CM^2)$ for some constant $C>0$ and for all $M>0$.\color{black}
	\end{proof}
	Now, to prove Theorem \ref{rkhsnorm}, let us take $p=p_n$ and $T_p$ as in Lemma \ref{lemaux3}.   Recall that
	\[
	\alpha_{p}(\cdot) =\sum_{j=1}^{p}\beta_{j,p} K(t_{j,p},\cdot) \mbox{ and } \hat{\alpha}_p(\cdot)=\sum_{j=1}^{p} \hat{\beta}_{j,p}K(t_{j,p},\cdot),
	\]
	and denote $\beta_p=(\beta_{1,p},\ldots,\beta_{p,p})'$, and $\hat\beta_p = (\hat\beta_{1,p},\ldots,\hat \beta_{p,p})'$, where $'$ stands for the transpose.
	Observe that $\|\hat{\alpha}_p-\alpha_{p}\|_K^2 = (\hat{\beta}_p-\beta_{p})'K_{T_p}(\hat{\beta}_p-\beta_{p})=(\hat{\beta}_p-\beta_{p})'K_{T_p}^{1/2} K_{T_p}^{1/2}(\hat{\beta}_p-\beta_{p}).$  Since $K_{T_p}^{1/2}=(K_{T_p}^{1/2})'$, 
	$\|\hat{\alpha}_p-\alpha_{p}\|_K^2 = \|K^{1/2}_{T_p}(\hat{\beta}_p-\beta_{p})\|_2^2\leq \|K_{T_p}^{1/2}\|_{op}^2\|\hat{\beta}_p-\beta_{p}\|_2^2.$ By Lemma \ref{lemaux3}, we can take $p$ large enough such that, $\|\hat{\alpha}_p-\alpha_p\|^2_K\leq 2\lambda_1p\|\hat{\beta}_p-\beta_p\|_2^2.$ So is is enough  to prove 
	\begin{equation}\label{convasbeta}
		\nu_n p\| \hat{\beta}_p-\beta_p\|_2^2\to 0 \quad \mbox{a.s.}
	\end{equation}
	   Since $\hat{\beta}_p=\text{argmin}_\nu  \|Y-\mathcal{X}_p\nu\|_2$,
	we have  $\|Y-\mathcal{X}_p\hat{\beta}_p\|_2\leq \|Y-\mathcal{X}_p\beta_p\|_2=\|\mathbf{e}\|_2$ and  
	$$\|Y-\mathcal{X}_p\hat{\beta}_p\|_2^2=\|\mathcal{X}_p\beta_p+\mathbf{e}-\mathcal{X}_p\hat{\beta}_p\|_2^2=\|\mathcal{X}_p(\hat{\beta}_p-\beta_p)\|_2^2+\|\mathbf{e}\|_2^2-
	2\mathbf{e} '\mathcal{X}_p(\hat{\beta}_p-\beta_p)$$
	so   $\|\mathcal{X}_p(\hat{\beta}_p-\beta_p)\|_2^2\leq 2\mathbf{e}^\prime \mathcal{X}_p(\hat{\beta}_p-\beta_p).$ Denote by $\Phi$ an $n\times p$ matrix whose columns form an orthonormal basis for the   linear space spanned by the columns of  $\mathcal{X}_p$. Then $\mathcal{X}_p(\hat{\beta}_p-\beta_p)=\Phi v$ \color{black} for some unique $v\in \mathbb{R}^p$, and $\tilde{e}=\Phi^\prime \textbf{e}$, then   denoting by $S^{p-1}$ the unit sphere of ${\mathbb R}^p$,\color{black}

	\begin{equation*}\label{eq:ez}
		\|\mathcal{X}_p(\hat{\beta}_p-\beta_p)\|_2\leq 2\textbf{e}^\prime\frac{\mathcal{X}_p(\hat{\beta}_p-\beta_p)}{\|\mathcal{X}_p(\hat{\beta}_p-\beta_p)\|_2}=2\tilde{e}^\prime  \frac{v}{\|v\|_2}\leq   2\sup_{u\in S^{p-1}}\tilde{e}^\prime u.\color{black}
	\end{equation*}
	
	\color{black}
	Let us denote $N_{1/2}$ a minimal covering of $S^{p-1}$ with balls of radius $1/2$, centred at points in $S^{p-1}$, its cardinality $|N_{1/2}|$ is bounded from above by $5^{p-1}$. For all   $u\in S^{p-1}$ there exist a point $z$ in the set $C_{1/2}$ of centres of the balls in $N_{1/2}$ and $w\in{\mathbb R}^p$ such that $u=z+w$, with $\|w\|_2\leq 1/2$. Denote $W_{1/2}$ the set of such $w's$ so   $\max_{u \in S^{p-1}} \tilde{e}^\prime  u\leq \max_{z\in C_{1/2}} \tilde{e}^\prime z \ \ + \ \ \max_{w\in W_{1/2}} \tilde{e}^\prime w,$ then 
	\begin{equation}\label{eq:zmax}
		2\sup_{u\in S^{p-1}}\tilde{e}^\prime u\leq 4\max_{z\in C_{1/2}} \tilde{e}^\prime z.
	\end{equation}\color{black}
	Observe that $\|K_{T_p}^{1/2}(\hat{\beta}_p-\beta_p)\|_2^2=(\hat{\beta}_p-\beta_p)^\prime K_{T_p}(\hat{\beta}_p-\beta_p)$.  Thus,
	\small
	\begin{align*}\label{lastbound}
		\|K_{T_p}^{1/2}(\hat{\beta}_p-\beta_p)\|_2^2=(\hat{\beta}_p-\beta_p)^\prime (K_{T_p}-(1/n)(\mathcal{X}_p^\prime \mathcal{X}_p))(\hat{\beta}_p-\beta_p)+
		(\hat{\beta}_p-\beta_p)^\prime (1/n)(\mathcal{X}_p^\prime \mathcal{X}_p)(\hat{\beta}_p-\beta_p).
	\end{align*}\color{black}
	\normalsize 
	  Now, using Lemma \ref{asconvlem} together with the inequalities $|x^\prime A x|\leq \|A\|_{op}\|x\|^2$, for $x=(\hat \beta_p-\beta_p)$, $A=(1/n)(\mathcal{X}_p^\prime \mathcal{X}_p)-K_{T_p}$ and  $\|K_{T_p}\|_{op}\geq \gamma_{p,p}$ we get that, eventually a.s., 
	$$\frac{1}{n}\|\mathcal{X}_p(\hat{\beta}_p-\beta_p)\|_2^2\geq \|K_{T_p}^{1/2}(\hat{\beta}_p-\beta_p)\|_2^2-\epsilon \gamma_{p,p}\|\hat{\beta}_p-\beta_p\|_2^2\geq \|\hat{\beta}_p-\beta_p\|_2^2\gamma_{p,p}-\epsilon \gamma_{p,p}\|\hat{\beta}_p- \beta_p\color{black}\|_2^2.$$
	Then, from \eqref{eq:zmax}, $
	\mathbb{P}(\nu_n p\| \hat{\beta}_p-\beta_p\|_2^2> \epsilon)\leq  5^{p-1}\max_{z\in S^{p-1}} \mathbb{P}\big(\tilde{e}^\prime z>\sqrt{n\epsilon \gamma_{p,p}(1-\epsilon)/(2p \nu_n)}\big).$ Now, note that there is a sub-Gaussianity bound, not depending on $z$, for the tail probabilities $\mathbb{P}(\tilde{e}^\prime z>t)$  (see the remarks immediately before Theorem \ref{rkhsnorm}). Finally, \eqref{convasbeta} follows from  $n\gamma_{p,p}/(p^2\nu_n\log n)\to \infty$. 
	
	
	\bibliographystyle{apalike}
	\bibliography{Refs}

\end{document}